\newtheorem{theorem}{Theorem}[section]
\newtheorem{lemma}[theorem]{Lemma}
\numberwithin{equation}{section}
\author{Mónica Clapp\footnote{M. Clapp was partially supported by CONACYT grant 237661 (Mexico) and UNAM-DGAPA-PAPIIT grants IN104315 and IN100718 (Mexico).} \quad and \quad Luis Lopez Rios\footnote{L. Lopez Rios was supported by a postdoctoral fellowship under CONACYT grant 237661 (Mexico).}}
\title{Entire nodal solutions to the pure critical exponent problem for the $p$-Laplacian}
\date{\today}
\begin{document}

\maketitle

\begin{abstract}
We establish the existence of multiple sign-changing solutions to the quasilinear critical problem
$$-\Delta_{p} u=|u|^{p^*-2}u, \qquad u\in D^{1,p}(\mathbb{R}^{N}),$$
for $N\geq4$, where $\Delta_{p}u:=\mathrm{div}(|\nabla u|^{p-2}\nabla u)$ is the $p$-Laplace operator, $1<p<N$ and $p^*:=\frac{Np}{N-p}$ is the critical Sobolev exponent.
\end{abstract} \vspace{6pt}

\section{Introduction}
\label{sec:introduction}
This paper is concerned with the existence of sign-changing solutions to the quasilinear critical problem
\begin{equation}
\label{prob}
-\Delta_{p} u=|u|^{p^*-2}u, \qquad u\in D^{1,p}(\mathbb{R}^{N}),
\end{equation}
for $N\geq4$, where $\Delta_{p}u:=\mathrm{div}(|\nabla u|^{p-2}\nabla u)$ is the $p$-Laplace operator, $1<p<N$ and $p^*:=\frac{Np}{N-p}$ is the critical Sobolev exponent.

It was recently shown in \cite{dmms,s,v} that this problem has a unique positive solution, up to translations and dilations, given by
$$U(x)=a_{N,p} \left( \frac{1}{1+|x|^{\frac{p}{p-1}}} \right)^\frac{N-p}{p},$$
where $a_{N,p}$ is a positive constant. This result extends the one for $p=2$ which was proved in \cite{cgs}. However, as far as we know, no sign-changing solutions to the problem (\ref{prob}) have been found, aside from the semilinear case $p=2$.

For $p=2$ the existence of sign-changing solutions was first established by W. Ding in \cite{di}, who took advantage of the invariance of the problem (\ref{prob}) under Möbius transformations to derive the existence of infinitely many sign-changing solutions. Later, new sign-changing solutions were exhibited by del Pino, Musso, Pacard, and Pistoia in \cite{dmpp}, who used the Lyapunov-Schmidt reduction method to establish the existence of sign-changing clusters of bubbles that solve problem (\ref{prob}) for $p=2$. 

Neither one of these methods applies to the quasilinear case. The $p$-Laplacian is invariant under Euclidean motions and dilations, but it is not invariant under the Kelvin transform, or any suitable version of it, except in the cases $p=2$ and $p=N$; see \cite{l}. So the argument in \cite{di} cannot be extended to other values of $p$. On the other hand, the Lyapunov-Schmidt reduction method used in \cite{dmpp} cannot be applied to the quasilinear case because the linearized operator for the $p$-Laplacian is not well understood for $p\neq2$.

A different type of sign-changing solutions to the problem (\ref{prob}), for $p=2$, was recently found  by the first author in \cite{c}. These solutions were obtained by combining the use of suitable symmetries with concentration arguments. We will show that this approach can be applied to the quasilinear case to prove the following result.

\begin{theorem}
\label{thm:main}
Let $N=4n+m$ with $n\geq1$ and $m\in\{0,1,2,3\}$. Then, for any $1<p<N$, the problem $(\ref{prob})$ has at least $n$ nonradial sign-changing solutions.
\end{theorem}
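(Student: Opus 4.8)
For each $k\in\{1,\dots,n\}$ the plan is to produce a critical point of the energy functional
$$
J(u)=\frac1p\int_{\mathbb{R}^N}|\nabla u|^p\,dx-\frac1{p^*}\int_{\mathbb{R}^N}|u|^{p^*}\,dx,\qquad u\in D^{1,p}(\mathbb{R}^N),
$$
in a suitable subspace of symmetric functions. Writing $N=4n+m$, set $d_k:=N-4k=4(n-k)+m\ge0$ and identify $\mathbb{R}^N=\mathbb{R}^{2k}\times\mathbb{R}^{2k}\times\mathbb{R}^{d_k}$. Let $\tau_k(x,y,z):=(y,x,z)$, let $G_k\le O(N)$ be generated by $O(2k)\times O(2k)\times O(d_k)$ acting blockwise together with $\tau_k$, so that $G_k\cong\bigl(O(2k)\times O(2k)\times O(d_k)\bigr)\rtimes\mathbb{Z}/2$, and let $\phi_k\colon G_k\to\{\pm1\}$ be the homomorphism that is trivial on $O(2k)\times O(2k)\times O(d_k)$ and sends $\tau_k$ to $-1$ (well defined because $\tau_k$ normalizes that subgroup and $\tau_k^2=e$). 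I would work in
$$
X_k:=\bigl\{u\in D^{1,p}(\mathbb{R}^N):u(gx)=\phi_k(g)\,u(x)\ \text{for all }g\in G_k,\ x\in\mathbb{R}^N\bigr\}.
$$
Since $g\cdot u:=\phi_k(g)\,(u\circ g^{-1})$ defines a linear isometric action of $G_k$ on $D^{1,p}(\mathbb{R}^N)$ under which $J$ is invariant and whose fixed-point space is exactly $X_k$, the principle of symmetric criticality guarantees that every nontrivial critical point of the restriction $J|_{X_k}$ solves $(\ref{prob})$.

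\textbf{Compactness.} The core of the argument is to show that $J|_{X_k}$ satisfies the Palais--Smale condition at every level $c\in\mathbb{R}$. Let $(u_j)\subset X_k$ be bounded with $J(u_j)\to c$ and $(J|_{X_k})'(u_j)\to0$; by symmetric criticality $(u_j)$ is then a Palais--Smale sequence for $J$ on all of $D^{1,p}(\mathbb{R}^N)$. I would appeal to a Struwe--type profile (bubble) decomposition for such sequences for the $p$-Laplacian: along a subsequence, $u_j=u_0+\sum_{i=1}^{\ell}\omega_i^j+o(1)$ in $D^{1,p}(\mathbb{R}^N)$, where $u_0$ solves $(\ref{prob})$ and each $\omega_i^j$ is a rescaled bubble $\sigma_i(\varepsilon_i^j)^{-(N-p)/p}\,U\!\bigl((\cdot-\xi_i^j)/\varepsilon_i^j\bigr)$ with $\sigma_i\in\{\pm1\}$ and $\varepsilon_i^j\to0^+$ --- the profile being $\pm U$ precisely because $U$ is, up to translations and dilations, the only positive solution. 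Now $|\nabla u_j|^p\,dx$ and $|u_j|^{p^*}\,dx$ are $G_k$-invariant measures, so $G_k$ permutes the finitely many bubbles; hence the $G_k$-orbit of each center $\xi_i^j$ is finite. Because $O(2k)$ (with $2k\ge2$) acts with infinite orbits away from the origin of each $\mathbb{R}^{2k}$ factor, the first two components of $\xi_i^j$ must vanish, so $\xi_i^j=(0,0,z_i^j)$ is fixed by $\tau_k$. But $u_j=-u_j\circ\tau_k$ then forces $-\omega_i^j\circ\tau_k$ to be one of the bubbles too; as $\tau_k$ is orthogonal, fixes $\xi_i^j$, and $U$ is radial, $-\omega_i^j\circ\tau_k$ is a bubble with the \emph{same} center and scale but \emph{opposite} sign as $\omega_i^j$, contradicting the mutual separation of distinct bubbles in the decomposition. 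Therefore $\ell=0$, $u_j\to u_0$ strongly, and $(PS)_c$ holds at every level.

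\textbf{Existence, nodal and nonradial character, multiplicity.} For each $k$, $X_k\ne\{0\}$ --- e.g.\ $(x,y,z)\mapsto\eta(|x|^2+|y|^2+|z|^2)\bigl(\psi(|x|)-\psi(|y|)\bigr)$ with $\eta$ a compactly supported cut-off and $\psi$ nonconstant lies in $X_k\setminus\{0\}$ --- and $J|_{X_k}$ has the mountain-pass geometry, since by the Sobolev inequality $\inf_{\|u\|_{D^{1,p}}=\rho}J|_{X_k}>0$ for small $\rho>0$ while $J(tu)\to-\infty$ as $t\to\infty$. The mountain-pass theorem together with the compactness above yields a critical point $u_k$ of $J|_{X_k}$ at a level $c_k>0$, which by symmetric criticality solves $(\ref{prob})$. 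Every nonzero element of $X_k$ changes sign (it equals the negative of its own composition with $\tau_k$) and is nonradial (it vanishes on the cone $\{|x|=|y|\}$, which meets spheres of every radius, whereas a radial function cannot). Finally, $u_1,\dots,u_n$ are pairwise distinct modulo the invariances of $(\ref{prob})$ --- translations, dilations, orthogonal maps and sign change: the symmetry group of $u_k$ contains $G_k$, and since a nonzero $D^{1,p}$ function cannot be invariant under rotation groups about two distinct centers, any similarity identifying $u_k$ with $\pm u_{k'}$ would have to fix the origin, after which the group generated by $G_{k'}$ and a rotated copy of $G_k$ --- strictly larger for $k\ne k'$ --- would force $u_{k'}$ to be radial, a contradiction. (Alternatively the $u_k$ can be separated by an energy estimate.) This produces the $n$ nonradial sign-changing solutions of Theorem~\ref{thm:main}.

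\textbf{Main obstacle.} The decisive, genuinely quasilinear step is the compactness above, and within it the profile decomposition of Palais--Smale sequences for the $p$-Laplacian energy: for $p\ne2$ the linearization is not understood, so one cannot mimic the case $p=2$ and must instead establish --- via a Brezis--Lieb splitting of $\int|\nabla u|^p$ combined with the classification of limiting profiles provided by the uniqueness of $U$ --- that every noncompact Palais--Smale sequence breaks up into a solution plus a finite sum of standard bubbles. With that in hand the orbit bookkeeping, the antisymmetry obstruction to bubbling, the mountain-pass scheme and the geometric distinctness of the $u_k$ are all comparatively routine.
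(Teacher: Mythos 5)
Your overall architecture (antisymmetric subspaces $X_k$, symmetric criticality, a compactness analysis, mountain pass) is reasonable in outline, but the step you yourself flag as decisive --- the Palais--Smale condition at every level via a Struwe-type profile decomposition --- is where the proof fails, and it fails for reasons that cannot be repaired along the lines you propose. First, on $\mathbb{R}^N$ the functional $J$ and the subspace $X_k$ are invariant under the dilations $u\mapsto\varepsilon^{-(N-p)/p}u(\cdot/\varepsilon)$, so if $u_0$ were any nontrivial critical point then $u_j:=\varepsilon_j^{-(N-p)/p}u_0(\cdot/\varepsilon_j)$ with $\varepsilon_j\to0$ would be a bounded Palais--Smale sequence in $X_k$ at level $J(u_0)>0$ with $u_j\rightharpoonup0$ and $\|u_j\|$ constant, so no subsequence converges strongly. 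Hence $(PS)_c$ fails at precisely the critical levels you need, and your argument is circular: PS at all levels would produce a solution whose existence refutes PS. Second, even the tool you invoke is unavailable for $p\neq2$: a Struwe-type decomposition requires classifying the limiting profiles, which are arbitrary (possibly sign-changing) solutions of the limit problem in $\mathbb{R}^N$ or in half-spaces. The uniqueness results of \cite{dmms,s,v} cover only \emph{positive} entire solutions; sign-changing entire solutions exist (that is the very theorem being proved), and nontrivial half-space solutions cannot be excluded because unique continuation for the $p$-Laplacian is an open problem. So your identification of the profiles as $\pm U$, and with it the antisymmetry obstruction to bubbling, does not go through.

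The paper's strategy is designed precisely to avoid proving compactness. It minimizes over the $\phi$-equivariant Nehari manifold of the unit ball and proves a representation theorem (Theorem \ref{thm:profile}) for a single minimizing Palais--Smale sequence: either it converges to a minimizer in the ball, or it concentrates at a $G$-fixed point and the rescaled weak limit $W$ is a least-energy $\phi$-equivariant solution on $\mathbb{R}^N$ or on a half-space. Lemma \ref{lem:leastenergy} shows that all these domains share the same minimal equivariant energy $c^\phi_\infty$, so in every case trivial extension yields a minimizer of $J$ on $\mathcal{N}^\phi(\mathbb{R}^N)$, hence a solution: concentration is not excluded, it is exploited. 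Your distinctness argument is also vaguer than necessary; the paper's groups $G_j=\Gamma^j\times\Lambda_j$ are chosen so that a single group element separates $\phi_i$- from $\phi_j$-equivariant functions pointwise, and only distinctness as functions, not distinctness modulo the symmetries of the equation, is claimed or needed.
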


It is worth noting that every solution to problem (\ref{prob}) belongs to $\mathcal{C}_\mathrm{loc}^{1,\alpha}(\mathbb{R}^N)$ for some $\alpha \in (0,1)$, and satisfies the decay estimates
$$|u(x)| \leq C_0(1+|x|^{\frac{N-p}{p-1}})^{-1} \quad \text{and} \quad |\nabla u(x)| \leq C_0(1+|x|^{\frac{N-1}{p-1}})^{-1},$$
for every $x \in \mathbb{R}^N$. These estimates were recently obtained by Vétois in \cite{v}.

We also mention that positive and sign-changing solutions to the quasilinear equation (\ref{prob}) in some bounded domains have been exhibited in \cite{ct,mp,mss}. Multiplicity of entire solutions to a related quasilinear critical problem, obtained by adding a suitable term to problem (\ref{prob}), was recently established in \cite{bcmp}, although nothing is said about their sign.

The solutions given by Theorem \ref{thm:main} arise as limit profiles of minimizing $\phi$\textit{-equivariant} Palais-Smale sequences for the energy functional associated to the problem
\begin{equation}
\label{eq:general}
-\Delta_{p} u=|u|^{p^*-2}u, \qquad u\in D_0^{1,p}(\mathbb{B}),
\end{equation}
in the unit ball $\mathbb{B}$ in $\mathbb{R}^N$. A $\phi$-equivariant function is a function with a particular type of sign-changing symmetries; the precise definition is given in the following section. We prove a representation theorem for these sequences; see Theorem \ref{thm:profile} below. This result yields an existence alternative: it says that the energy functional has a $\phi$-equivariant minimizer, either in the unit ball, or in a half-space, or in the whole Euclidean space $\mathbb{R}^N$. Moreover, we will prove that the energy of these minimizers is the same in any one of these domains; see Lemma \ref{lem:leastenergy}. So, after trivial extension, this allows us to conclude that the energy functional has a $\phi$-equivariant minimizer in $\mathbb{R}^N$.

If $p=2$ it is well known that the problem \eqref{eq:general} does not have a nontrivial solution, neither in the unit ball, nor in a half-space. But if $p\neq 2$ it is not known whether this is true or not, because the validity of the unique continuation principle is still an open question. So, in principle, there could be solutions to the problem (\ref{prob}) which vanish in some open set.

The multiplicity statement in Theorem \ref{thm:main} is obtained by considering various symmetries which give rise to different solutions.

This paper is organized as follows: in Section \ref{sec:profile} we introduce our symmetric setting and prove a representation theorem for minimizing $\phi$-equivariant Palais-Smale sequences in a bounded symmetric domain. In Section \ref{sec:existence} we prove our main result. Some facts needed for the proof of the representation theorem are proved in the appendix.

\section{The limit profile of a nodal symmetric Palais-Smale sequence}
\label{sec:profile}

As in \cite{c}, we consider the following symmetric setting.

Let $G$ be a closed subgroup of the group $O(N)$ of linear isometries of $\mathbb{R}^{N}$ and let $\phi:G\rightarrow\mathbb{Z}_{2}:=\{1,-1\}$ be a continuous homomorphism of groups. Recall that the $G$-orbit of a point $x\in\mathbb{R}^{N}$ is the set $Gx:=\{gx:g\in G\}.$ 

Hereafter, we will assume that $G$ and $\phi$ have the following properties:

\begin{itemize}
\item[$(\mathbf{S1})$]For each $x \in \mathbb{R}^N$, either $\dim(Gx)>0$ or $Gx=\{x\}$.
\item[$(\mathbf{S2})$]$\phi:G\rightarrow\mathbb{Z}_{2}$ is surjective.
\item[$(\mathbf{S3})$]There exists $\xi\in\mathbb{R}^{N}$ such that $\{g\in G:g\xi=\xi\}\subset\ker\phi$.
\end{itemize}

Let $\Omega$ be a $G$-invariant domain in $\mathbb{R}^N$, i.e., $Gx\subset\Omega$ if $x\in\Omega$. A function $u:\Omega\rightarrow\mathbb{R}$ will be called $\phi$\emph{-equivariant} if
\begin{equation*}
u(gx)=\phi(g)u(x)\qquad \text{for all}\,\, g\in G,\,x\in\Omega.
\end{equation*}
Note that, as $\phi$ is surjective, every nontrivial $\phi$-equivariant function is nonradial and changes sign.

Let $D^{1,p}(\mathbb{R}^N):=\{u\in L^{p^*}(\mathbb{R}^N):\nabla u \in L^p(\mathbb{R}^N,\mathbb{R}^N)\}$ be the Banach space whose norm is given by
\begin{equation*}
\|u\|:=\left( \int_{\mathbb{R}^N}|\nabla u|^p \right)^\frac{1}{p}.
\end{equation*}
As usual, we write $D_0^{1,p}(\Omega)$ for the closure of $\mathcal{C}_c^\infty(\Omega)$ in $D^{1,p}(\mathbb{R}^N)$. We define
\begin{equation*}
D_0^{1,p}(\Omega)^\phi:=\{u\in D_0^{1,p}(\Omega):u\,\text{is $\phi$-equivariant} \}.
\end{equation*}
Property $(\mathbf{S3})$ ensures that this space is infinite dimensional; see \cite{bcm}.

The $\phi$-equivariant solutions to the problem
$$ -\Delta_{p} u=|u|^{p^*-2}u, \qquad u\in D_0^{1,p}(\Omega),$$
are the critical points of the $\mathcal{C}^1$-functional $J:D_0^{1,p}(\Omega)^\phi \to \mathbb{R}$ given by
$$J(u):=\frac{1}{p}\|u\|^p-\frac{1}{p^*}|u|_{p^*}^{p^*},$$
where $|u|_{p^*}^{p^*} := \int_{\Omega}|u|^{p^*}$; see Lemma \ref{lem:symcrit}. The nontrivial ones belong to the set
$$\mathcal{N}^\phi(\Omega):=\{u\in D_0^{1,p}(\Omega)^\phi:u\neq0,\,\|u\|^p=|u|_{p^*}^{p^*}\}.$$
Define
$$c^\phi(\Omega):=\inf_{u\in \mathcal{N}^\phi(\Omega)}J(u).$$

The following facts are well known. We include their proof for the sake of completeness.

\begin{lemma}
\label{lem:nehari}
\begin{enumerate}
	\item[$(a)$]There exists $a_0 >0$ such that $\|u\| \geq a_0$ for every $u\in \mathcal{N}^\phi(\Omega)$.
	\item[$(b)$]$\mathcal{N}^\phi(\Omega)$ is a $\mathcal{C}^1$-Banach submanifold of $D_0^{1,p}(\Omega)^\phi$, and a natural constraint for $J$. 
	\item[$(c)$]Let $\mathcal{T}:=\left\{\sigma \in \mathcal{C}^0\left([0,1],D_0^{1,p}(\Omega)^\phi\right) :\sigma(0)=0,\,\sigma(1)\neq 0, \,J(\sigma(1)) \leq 0\right\}.$ Then,
	$$c^\phi(\Omega)=\inf_{\sigma \in \mathcal{T}} \max_{t \in [0,1]}J(\sigma(u)).$$
\end{enumerate}
\end{lemma}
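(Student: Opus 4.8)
The lemma as written collects three standard facts about the Nehari manifold for $J$. Let me plan a proof of all three parts.

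The three assertions are the standard Nehari-manifold package for $J$; the only thing to keep track of is that every construction (fibering maps, paths, cut-offs) stays inside the closed linear subspace $D_0^{1,p}(\Omega)^\phi$, which is harmless. For $(a)$ I would invoke the Sobolev inequality $|u|_{p^*}\le S^{-1/p}\|u\|$, valid on all of $D^{1,p}(\mathbb{R}^N)$ and hence on the subspace: if $u\in\mathcal{N}^\phi(\Omega)$ then $\|u\|^p=|u|_{p^*}^{p^*}\le S^{-p^*/p}\|u\|^{p^*}$, and since $p^*>p$ this yields $\|u\|\ge S^{p^*/(p(p^*-p))}=:a_0>0$.

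For $(b)$ I would set $\psi(u):=\|u\|^p-|u|_{p^*}^{p^*}$, a $\mathcal{C}^1$ functional on $D_0^{1,p}(\Omega)^\phi$ with $\mathcal{N}^\phi(\Omega)=\psi^{-1}(0)\setminus\{0\}$, and compute $\psi'(u)u=p\|u\|^p-p^*|u|_{p^*}^{p^*}=(p-p^*)\|u\|^p$. By $(a)$ this is $\le(p-p^*)a_0^p<0$ on $\mathcal{N}^\phi(\Omega)$, so $0$ is a regular value of $\psi$ there and $\mathcal{N}^\phi(\Omega)$ is a $\mathcal{C}^1$ Banach submanifold of codimension one. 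If $u$ is a critical point of $J|_{\mathcal{N}^\phi(\Omega)}$, the Lagrange multiplier rule gives $J'(u)=\lambda\,\psi'(u)$; testing against $u$ and using $J'(u)u=\psi(u)=0$ and $\psi'(u)u\ne0$ forces $\lambda=0$, hence $J'(u)=0$, which is the natural-constraint property. Along the way I would record, for each $u\in D_0^{1,p}(\Omega)^\phi\setminus\{0\}$, the fibering map $t\mapsto J(tu)$: it is positive and increasing on $(0,t_u)$ and strictly decreasing on $(t_u,\infty)$ with $J(tu)\to-\infty$, where $t_u:=(\|u\|^p/|u|_{p^*}^{p^*})^{1/(p^*-p)}$ is the unique positive number with $t_uu\in\mathcal{N}^\phi(\Omega)$ and $J(t_uu)=\max_{t\ge0}J(tu)$.

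For $(c)$ (with the obvious reading $J(\sigma(t))$), to get ``$\le$'' I take any $u\in\mathcal{N}^\phi(\Omega)$ (so $t_u=1$), choose $T>1$ with $J(Tu)\le 0$, and let $\sigma(t):=tTu$; then $\sigma\in\mathcal{T}$ and $\max_{t\in[0,1]}J(\sigma(t))=\max_{s\in[0,T]}J(su)=J(u)$, whence $\inf_{\sigma\in\mathcal{T}}\max_t J(\sigma(t))\le c^\phi(\Omega)$. For ``$\ge$'', I fix $\sigma\in\mathcal{T}$ and set $g(t):=\psi(\sigma(t))$. The Sobolev inequality provides $\delta>0$ with $g(u)>0$ whenever $0<\|u\|<\delta$, while $J(\sigma(1))\le 0$ forces $|\sigma(1)|_{p^*}^{p^*}\ge\frac{p^*}{p}\|\sigma(1)\|^p>\|\sigma(1)\|^p$, i.e. $g(1)<0$. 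Letting $\tau:=\inf\{t\in[0,1]:g(t)<0\}$, continuity of $g$ together with $g\ge 0$ on a neighbourhood of $0$ (where $\|\sigma(t)\|<\delta$) and $g(1)<0$ forces $\tau\in(0,1)$, $g(\tau)=0$ and $\sigma(\tau)\ne 0$, so $\sigma(\tau)\in\mathcal{N}^\phi(\Omega)$ and $\max_{t\in[0,1]}J(\sigma(t))\ge J(\sigma(\tau))\ge c^\phi(\Omega)$; taking the infimum over $\sigma$ completes the proof. No step is genuinely hard; the one point requiring care is this crossing argument — showing every admissible path meets $\mathcal{N}^\phi(\Omega)$ at a point other than the origin — which is exactly where the uniform gap near $0$ from part $(a)$ is needed.
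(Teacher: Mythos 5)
Your proof is correct and follows essentially the same route as the paper: Sobolev's inequality for $(a)$, the regular-value/Lagrange-multiplier computation $F'(u)u=(p-p^*)\|u\|^p<0$ for $(b)$, and the fibering map plus an intermediate-value crossing argument for $(c)$. The only cosmetic difference is in the lower bound of $(c)$: the paper applies the intermediate value theorem to the normalized quotient $\kappa(u)=|u|_{p^*}^{p^*}/\|u\|^p$ (continuous with $\kappa(0)=0$), while you work with $F\circ\sigma$ directly and use the uniform positivity of $F$ near the origin from $(a)$ to rule out a crossing at $u=0$; both are valid.
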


\begin{proof}
$(a):$ By Sobolev's inequality, there exists $C>0$ such that
$$F(u):= \|u\|^p - |u|_{p^*}^{p^*} \geq \|u\|^p - C \|u\|^{p^*}\qquad\text{for every }\,u \in D_0^{1,p}(\Omega).$$
Hence, there exists $a_0$ such that $F(u)>0$ if $0 < \|u\| < a_0$. This proves $(a)$.

$(b):$ It follows from $(a)$ that $\mathcal{N}^\phi(\Omega)$ is closed in $D_0^{1,p}(\Omega)^{\phi}$. Moreover, as
$$F'(u)u = p\|u\|^p - p^*|u|_{p^*}^{p^*} = (p-p^*)\|u\|^p < 0\quad\text{for every }\,u\in \mathcal{N}^\phi(\Omega),$$
we have that $0$ is a regular value of $F:D_0^{1,p}(\Omega)^{\phi}\smallsetminus\{0\} \to \mathbb{R}$. Hence, $\mathcal{N}^\phi(\Omega)$ is a $\mathcal{C}^1$-Banach submanifold of $D_0^{1,p}(\Omega)^\phi$. This inequality also implies that $\mathcal{N}^\phi(\Omega)$ is a natural constraint for $J$.

$(c):$ For each $u\in\mathcal{N}^\phi(\Omega)$, the function 
$$t\mapsto J(tu)= \left(\frac{t^p}{p} - \frac{t^{p^*}}{p^*}\right)\|u\|^p$$ 
is strictly increasing in $(0,1)$ and strictly decreasing in $(1,\infty)$, and there exists $s_u > 1$ such that $J(s_uu)<0$. So, setting $\sigma_u(t):=ts_uu$ we have that $\sigma_u \in \mathcal{T}$ and $\max_{t\in[0,1]}J(\sigma_u (t)) = J(u)$. Therefore,
$$\inf_{\sigma \in \mathcal{T}} \max_{t \in [0,1]}J(\sigma(u)) \leq \inf_{u\in\mathcal{N}^\phi(\Omega)}\max_{t\in[0,1]}J(\sigma_u (t)) = \inf_{u\in\mathcal{N}^\phi(\Omega)}J(u)=c^\phi(\Omega).$$
To prove the opposite inequality, we define $\kappa:D_0^{1,p}(\Omega)^{\phi}\to\mathbb{R}$ as
\begin{equation*}
\kappa(u):=\left\{
\begin{array}{cl}
\frac{|u|_{p^*}^{p^*}}{\|u\|^p}&\text{if }\,u\neq 0,\\ [4pt]
0&\text{if }\,u=0.
\end{array}
\right.
\end{equation*}
This function is continuous thanks to Sobolev's inequality. Note that $\kappa(u)=1$ iff $u\in \mathcal{N}^\phi(\Omega)$. Moreover, if $J(u) \leq 0$ and $u\neq 0$, then $\kappa(u) \geq \frac{p^*}{p} >1$. So, if $\sigma \in \mathcal{T}$, then $\kappa(\sigma(0))=0$ and $\kappa(\sigma(1))>1$. Hence, there exists $t_0 \in (0,1)$ such that $\sigma(t_0)\in \mathcal{N}^\phi(\Omega)$ and, consequently, $\max_{t \in [0,1]}J(\sigma(t)) \geq J(\sigma(t_0)) \geq c^\phi(\Omega)$. This implies that
$$\inf_{\sigma \in \mathcal{T}} \max_{t \in [0,1]}J(\sigma(u)) \geq c^\phi(\Omega),$$
and finishes the proof of $(c)$.
\end{proof}

\begin{lemma}
\label{lem:willem}
There exists a sequence  $(u_k)$ such that
$$u_k\in D^{1,p}_0(\Omega)^{\phi},\quad J(u_k) \to c^{\phi}(\Omega),\quad \text{and }\quad J'(u_k) \to 0\, \text{ in }\,(D^{1,p}_0(\Omega)^{\phi})'.$$ 
\end{lemma}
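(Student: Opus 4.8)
The plan is to produce $(u_k)$ by a direct application of the mountain pass theorem, in the form that yields a Palais--Smale sequence at the minimax level without assuming the Palais--Smale condition (see, e.g., \cite{w}); all the ingredients needed are already contained in Lemma \ref{lem:nehari}. Throughout I use that $D_0^{1,p}(\Omega)^\phi$ is a Banach space (a closed subspace of $D^{1,p}(\mathbb{R}^N)$, since $\phi$-equivariance is preserved under $D^{1,p}$-convergence) and that $J$ is of class $\mathcal{C}^1$ on it.

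First I would verify the mountain pass geometry for $J$ on $D_0^{1,p}(\Omega)^\phi$. One has $J(0)=0$, and, exactly as in the proof of Lemma \ref{lem:nehari}(a), Sobolev's inequality gives $J(u)\ge\frac{1}{p}\|u\|^p-\frac{C}{p^*}\|u\|^{p^*}$, so there are $\rho,\alpha>0$ with $J(u)\ge\alpha$ whenever $\|u\|=\rho$. On the other hand $D_0^{1,p}(\Omega)^\phi\ne\{0\}$ by $(\mathbf{S3})$; choosing any $w\ne0$ in this space, we have $J(tw)=\bigl(\tfrac{t^p}{p}-\tfrac{t^{p^*}}{p^*}\bigr)\|w\|^p\to-\infty$ as $t\to\infty$, so we may fix $e:=t_0w$ with $\|e\|>\rho$ and $J(e)\le0<\alpha$.

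With this geometry in place, the mountain pass theorem provides a sequence $(u_k)\subset D_0^{1,p}(\Omega)^\phi$ with $J(u_k)\to c$ and $J'(u_k)\to0$ in $(D_0^{1,p}(\Omega)^\phi)'$, where $c$ is the minimax value over paths in $D_0^{1,p}(\Omega)^\phi$ joining $0$ to $e$. It then remains to see that $c=c^\phi(\Omega)$. Since the class of such fixed-endpoint paths is contained in $\mathcal{T}$, Lemma \ref{lem:nehari}(c) gives $c^\phi(\Omega)\le c$. For the reverse inequality one may take a minimizing sequence $v_j\in\mathcal{N}^\phi(\Omega)$ with $J(v_j)\to c^\phi(\Omega)$ and endpoints $e_j:=s_{v_j}v_j$ (with $s_{v_j}$ as in the proof of Lemma \ref{lem:nehari}(c), so that $\max_{t\in[0,1]}J(ts_{v_j}v_j)=J(v_j)$); the corresponding minimax levels $c_j$ satisfy $c^\phi(\Omega)\le c_j\le J(v_j)\to c^\phi(\Omega)$, and a diagonal argument over the Palais--Smale sequences produced at each level $c_j$ yields the required sequence at level $c^\phi(\Omega)$. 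Alternatively one invokes directly the variant of the mountain pass theorem in which the endpoint ranges freely over $\{u\ne0:J(u)\le0\}$, whose minimax level is precisely $c^\phi(\Omega)$ by Lemma \ref{lem:nehari}(c).

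The argument is routine; the only point calling for a word of care is this identification of the minimax level with $c^\phi(\Omega)$, because the classical statement fixes the endpoint whereas $c^\phi(\Omega)$ is defined through the free-endpoint class $\mathcal{T}$. This can be bypassed altogether by instead applying Ekeland's variational principle on the complete $\mathcal{C}^1$-Banach manifold $\mathcal{N}^\phi(\Omega)$, on which $J$ is bounded below by $\bigl(\tfrac1p-\tfrac1{p^*}\bigr)a_0^p>0$ thanks to Lemma \ref{lem:nehari}(a): this gives a minimizing sequence $(u_k)\subset\mathcal{N}^\phi(\Omega)$ with $\bigl(J|_{\mathcal{N}^\phi(\Omega)}\bigr)'(u_k)\to0$, and since $\mathcal{N}^\phi(\Omega)$ is a natural constraint (Lemma \ref{lem:nehari}(b)) --- using that $F'(u_k)u_k=(p-p^*)\|u_k\|^p$ stays bounded away from $0$ while $\|F'(u_k)\|$ stays bounded along the sequence --- the Lagrange multipliers tend to $0$ and one recovers $J'(u_k)\to0$ in $(D_0^{1,p}(\Omega)^\phi)'$.
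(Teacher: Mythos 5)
Your proposal is correct and is essentially the paper's own argument: the proof in the paper consists of the single observation that Lemma \ref{lem:nehari}$(a)$ supplies the mountain pass geometry, Lemma \ref{lem:nehari}$(c)$ identifies the minimax level with $c^\phi(\Omega)$, and \cite[Theorem 2.9]{w} then yields the Palais--Smale sequence. The extra care you take in reconciling the fixed-endpoint minimax class with the free-endpoint class $\mathcal{T}$ (via the diagonal argument), as well as the alternative route through Ekeland's principle on $\mathcal{N}^\phi(\Omega)$, are both sound but go beyond what the paper spells out.
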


\begin{proof}
This follows immediately from statements $(a)$ and $(c)$ of Lemma \ref{lem:nehari}, and \cite[Theorem 2.9]{w}.
\end{proof}

Next, we shall describe the limit profile of these sequences. 

If $X$ is a $G$-invariant subset of $\mathbb{R}^N$, we denote by
$$X^G:=\{x\in X:Gx=\{x\}\}$$
the set of $G$-fixed points in $X$. We start with the following lemmas.

\begin{lemma}
\label{lem:leastenergy}
If $\Omega$ is a $G$-invariant domain in $\mathbb{R}^N$ and $\Omega^G \neq \emptyset$, then
$$c^\phi(\Omega)=c^\phi(\mathbb{R}^N)=:c_{\infty}^\phi.$$
\end{lemma}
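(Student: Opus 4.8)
The plan is to prove the two inequalities $c^\phi(\mathbb{R}^N)\le c^\phi(\Omega)$ and $c^\phi(\Omega)\le c^\phi(\mathbb{R}^N)$ separately; the hypothesis $\Omega^G\neq\emptyset$ will be used only for the second. The first holds for \emph{any} $G$-invariant domain, by trivial extension. If $u\in D_0^{1,p}(\Omega)^\phi$, let $\bar u\in D^{1,p}(\mathbb{R}^N)$ be its extension by zero. Since $\Omega$ is $G$-invariant, $x\notin\Omega$ forces $gx\notin\Omega$ for all $g\in G$, so the identity $\bar u(gx)=\phi(g)\bar u(x)$ holds trivially off $\Omega$ and by $\phi$-equivariance of $u$ on $\Omega$; moreover $\bar u$ is a $D^{1,p}$-limit of the zero-extensions of functions in $\mathcal{C}_c^\infty(\Omega)$, so $\bar u\in D_0^{1,p}(\mathbb{R}^N)^\phi$. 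As extension by zero preserves $\|\cdot\|$ and $|\cdot|_{p^*}$, it maps $\mathcal{N}^\phi(\Omega)$ into $\mathcal{N}^\phi(\mathbb{R}^N)$ and leaves $J$ unchanged, giving $c^\phi(\mathbb{R}^N)\le c^\phi(\Omega)$.

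For the reverse inequality, fix $\varepsilon>0$ and pick $u\in\mathcal{N}^\phi(\mathbb{R}^N)$ with $J(u)\le c^\phi(\mathbb{R}^N)+\varepsilon$. First I would replace $u$ by a compactly supported $\phi$-equivariant function: multiplying by a radial cut-off and then mollifying with a radial mollifier preserves $\phi$-equivariance (because $G\subset O(N)$) while approximating $u$ in $D^{1,p}(\mathbb{R}^N)$, which yields $v\in\mathcal{C}_c^\infty(\mathbb{R}^N)^\phi\setminus\{0\}$ arbitrarily close to $u$. By the same elementary analysis of $t\mapsto J(tv)$ as in the proof of Lemma~\ref{lem:nehari}, this map attains its maximum at the unique $t_v>0$ with $t_vv\in\mathcal{N}^\phi(\mathbb{R}^N)$, and this maximal value depends continuously on $v$ and equals $J(u)$ when $v=u$; so we may arrange $\max_{t\ge0}J(tv)\le c^\phi(\mathbb{R}^N)+2\varepsilon$. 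Next, using the point $\xi\in\Omega^G$ given by hypothesis, choose $\rho>0$ with $B_\rho(\xi)\subset\Omega$ and set
$$w_\lambda(x):=\lambda^{\frac{N-p}{p}}\,v\!\left(\lambda(x-\xi)\right),\qquad\lambda>0.$$
Since $G$ acts linearly and fixes $\xi$, one has $g(x-\xi)=gx-\xi$, so $w_\lambda$ is again $\phi$-equivariant; and as $\operatorname{supp}v$ is compact, $\operatorname{supp}w_\lambda\subset B_\rho(\xi)\subset\Omega$ once $\lambda$ is large, whence $w_\lambda\in D_0^{1,p}(\Omega)^\phi$. The change of variables $y=\lambda(x-\xi)$ gives $\|w_\lambda\|=\|v\|$ and $|w_\lambda|_{p^*}=|v|_{p^*}$, so $J(tw_\lambda)=J(tv)$ for every $t\ge0$; taking $t_{w_\lambda}>0$ with $t_{w_\lambda}w_\lambda\in\mathcal{N}^\phi(\Omega)$, we get $c^\phi(\Omega)\le J(t_{w_\lambda}w_\lambda)=\max_{t\ge0}J(tv)\le c^\phi(\mathbb{R}^N)+2\varepsilon$. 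Letting $\varepsilon\to0$ and combining with the first inequality completes the proof.

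The step I expect to be the main obstacle is the first one in the second part: replacing a general near-minimizer of $c^\phi(\mathbb{R}^N)$ in $D^{1,p}(\mathbb{R}^N)^\phi$ by a \emph{compactly supported} one without destroying $\phi$-equivariance. This is precisely why radial cut-offs and radial mollifiers must be used, and it also relies on the (elementary but needed) continuity of $v\mapsto\max_{t\ge0}J(tv)$. Everything else is scaling and translation bookkeeping, whose essential structural input is that a translation by a point of $\Omega^G$ is a symmetry-preserving operation — and this is exactly where the assumption $\Omega^G\neq\emptyset$ enters.
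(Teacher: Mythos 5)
Your proof is correct and follows essentially the same route as the paper's: trivial extension gives $c_{\infty}^\phi\le c^\phi(\Omega)$, and rescaling/translating a compactly supported smooth $\phi$-equivariant near-minimizer about a point of $\Omega^G$ gives the reverse inequality. The only difference is that you explicitly justify the existence of near-minimizers in $\mathcal{N}^\phi(\mathbb{R}^N)\cap\mathcal{C}_c^\infty(\mathbb{R}^N)$ (radial cut-off, radial mollifier, continuity of the Nehari projection), a density fact the paper invokes without comment.
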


\begin{proof}
Clearly, $c_{\infty}^\phi \leq c^\phi(\Omega)$. For the opposite inequality, we fix $x_0\in \Omega^G$ and consider a sequence $(\varphi_k)$ in $\mathcal{N}^\phi(\mathbb{R}^N) \cap \mathcal{C}_c^\infty(\mathbb{R}^N)$ such that $J(\varphi_k) \to c_\infty^\phi$. Since $\varphi_k$ has compact support, we may choose $\varepsilon_k >0$ such that the support of $\tilde{\varphi}_k(x) := \varepsilon_k^{-(N-p)/p} \varphi_k (\varepsilon_k^{-1}(x-x_0))$ is contained in $\Omega$. As $x_0$ is a $G$-fixed point, $\tilde{\varphi}_k$ is $\phi$-equivariant. Thus, we have that $\tilde{\varphi}_k \in \mathcal{N}^\phi(\Omega)$ and, hence,
  \begin{equation*}
    c^\phi(\Omega) \le J(\tilde{\varphi}_k) = J(\varphi_k) \quad \text{for all } k.
  \end{equation*}
Letting $k \to \infty$ we conclude that $c^\phi(\Omega) \leq c_{\infty}^\phi$.
\end{proof}

\begin{lemma}
\label{lem:orbits}
If $G$ satisfies $(\mathbf{S1})$ then, for every pair of sequences $(\varepsilon_k)$ in $(0,\infty)$ and $(x_k)$ in $\mathbb{R}^N$, there exists a sequence $(\xi_k)$ in $\mathbb{R}^N$ such that, after passing to a subsequence,
\begin{equation}
\label{eq:orbits}
\varepsilon_k^{-1}\mathrm{dist}(Gx_k,\xi_k) \leq C_0 \qquad\text{for all }\,k
\end{equation}
and some positive constant $C_0$, and one of the following statements holds true:
\begin{enumerate}
	\item[$(a)$]either $\xi_k \in \Omega^G$,
	\item[$(b)$]or, for each $m \in \mathbb{N}$, there exist $g_1,...,g_m \in G$ such that
	$$\varepsilon_k^{-1} |g_i\xi_k - g_j\xi_k| \to \infty \quad\text{as }\,k \to \infty \qquad\text{if }\,i \neq j.$$
\end{enumerate}
\end{lemma}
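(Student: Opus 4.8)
The plan is to exploit the dichotomy in hypothesis $(\mathbf{S1})$: for each point $x$, either its $G$-orbit is a single fixed point, or it is a positive-dimensional submanifold of $\mathbb{R}^N$, hence (since $G$ is compact) a compact manifold of diameter bounded below by a positive constant. First I would normalize: for each $k$, pick a point $y_k$ on the orbit $Gx_k$ realizing $\mathrm{dist}(Gx_k,\Omega^G)$ if $\Omega^G\neq\emptyset$, or just $y_k=x_k$ otherwise, and analyze the quantity $r_k:=\varepsilon_k^{-1}\,\mathrm{dist}(Gx_k,\Omega^G)$ (interpreted as $+\infty$ if $\Omega^G=\emptyset$). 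If $r_k$ stays bounded along a subsequence, I take $\xi_k\in\Omega^G$ to be a nearest fixed point, so that \eqref{eq:orbits} holds with $C_0=\sup_k r_k$ and we are in case $(a)$. So the substance is the case $r_k\to\infty$, where I must produce, for every fixed $m$, group elements $g_1,\dots,g_m$ whose images of a suitable $\xi_k$ spread apart at scale $\varepsilon_k$.

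In the case $r_k\to\infty$ I would set $\xi_k:=x_k$ (so \eqref{eq:orbits} holds trivially with $C_0=0$), and aim to show that the orbit $Gx_k$ has ``intrinsic diameter at scale $\varepsilon_k$'' tending to infinity, meaning $\varepsilon_k^{-1}\,\mathrm{diam}(Gx_k)\to\infty$ along a further subsequence. Granting this, a covering/pigeonhole argument finishes it: cover $Gx_k$ by balls of radius, say, $m\varepsilon_k$; if only boundedly many such balls were needed the diameter would be $O(m\varepsilon_k)$, a contradiction, so we can extract $m$ points $g_1\xi_k,\dots,g_m\xi_k$ on the orbit that are pairwise at distance $\gg\varepsilon_k$; a diagonal argument over $m$ then upgrades this to the stated limit. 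To get $\varepsilon_k^{-1}\mathrm{diam}(Gx_k)\to\infty$, I argue by contradiction: if $\mathrm{diam}(Gx_k)\le C\varepsilon_k$ along a subsequence, then either $\varepsilon_k\to0$, in which case $\mathrm{diam}(Gx_k)\to0$, forcing $Gx_k$ to be a single point (its dimension cannot be positive, since positive-dimensional $G$-orbits have diameter bounded below — here I would invoke the standard fact, provable from compactness of $G$ and continuity of the action, that $\{x:\dim Gx=0\}=\mathbb{R}^N\setminus\bigcup\{$positive-dim orbits$\}$ and that the latter have a uniform lower diameter bound on compact sets; alternatively use that a positive-dimensional orbit of a compact group contains a nontrivial circle); a single-point orbit $Gx_k=\{x_k\}$ means $x_k\in\mathbb{R}^N{}^G$, and if additionally $x_k\in\Omega$ then $x_k\in\Omega^G$, contradicting $r_k\to\infty$. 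If instead $\varepsilon_k\not\to0$, then $\mathrm{diam}(Gx_k)\le C\varepsilon_k$ bounded forces (after a subsequence) $Gx_k$ to converge in the Hausdorff metric; by $(\mathbf{S1})$ again each $Gx_k$ is either a point or positive-dimensional, and the bounded-diameter positive-dimensional case still keeps $x_k$ in a fixed compact set, so one reduces to controlling finitely many orbit types — here I would use that the distance from $x_k$ to $\Omega^G$ being $\gg\varepsilon_k\gtrsim1$ just says $\mathrm{dist}(Gx_k,\Omega^G)\to\infty$, again contradicting nothing unless... — so more carefully, in this sub-subcase I instead note $\xi_k=x_k$ and check directly that case $(b)$ can be arranged, or reroute through case $(a)$ by a compactness extraction.

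The delicate point — and where I expect to spend the most care — is precisely this last reduction: cleanly separating, via $(\mathbf{S1})$, the ``small orbit'' behavior ($Gx_k$ collapses to a fixed point, landing us in case $(a)$) from the ``spreading orbit'' behavior (case $(b)$), uniformly in $k$, without the orbit types oscillating between the two. I would handle this by defining $\delta_k:=\mathrm{diam}(Gx_k)$ and splitting the argument on $\limsup \varepsilon_k^{-1}\delta_k$: if it is finite we are (after moving $\xi_k$ to a nearby fixed point, whose existence needs $\delta_k\to0$, hence $\varepsilon_k\to0$ — and this is exactly where $(\mathbf{S1})$'s rigidity "$\dim Gx>0$ or $Gx=\{x\}$'' is used, ruling out intermediate-scale orbits) in case $(a)$; if it is infinite we pass to a subsequence with $\varepsilon_k^{-1}\delta_k\to\infty$ and run the covering argument for case $(b)$. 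The rest is routine. I'd relegate the uniform lower bound on diameters of positive-dimensional orbits, and the Hausdorff-compactness extractions, to the appendix, since the paper already says ``some facts needed for the proof of the representation theorem are proved in the appendix.''
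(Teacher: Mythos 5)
There is a genuine gap. Your dichotomy is anchored to the wrong set and rests on a false auxiliary claim. The paper's proof is a short linear-algebra argument: since $G\subset O(N)$, the fixed-point set $(\mathbb{R}^N)^G$ is a linear subspace with $G$-invariant orthogonal complement, so one writes $x_k=z_k+y_k$ with $z_k\in(\mathbb{R}^N)^G$ and $y_k\perp(\mathbb{R}^N)^G$, and splits on whether $\varepsilon_k^{-1}|y_k|$ is bounded. If bounded, $\xi_k:=z_k$ is a fixed point with $\mathrm{dist}(Gx_k,\xi_k)\le|y_k|\le C\varepsilon_k$ (note: alternative $(a)$ should be read as $\xi_k\in(\mathbb{R}^N)^G$, which is what the proof of Theorem \ref{thm:profile} actually uses; measuring distance to $\Omega^G$ as you do makes your dichotomy non-exhaustive, e.g.\ when $x_k$ is a fixed point far from $\Omega$, where neither your case $(a)$ nor $(b)$ can hold). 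If unbounded, one normalizes $y_k/|y_k|\to y$, a unit vector not in $(\mathbb{R}^N)^G$; by $(\mathbf{S1})$ the orbit $Gy$ is infinite, so for each $m$ there are \emph{fixed} $g_1,\dots,g_m$ with $|g_iy-g_jy|\ge\delta>0$, and scaling back gives $\varepsilon_k^{-1}|g_ix_k-g_jx_k|\ge\delta\varepsilon_k^{-1}|y_k|\to\infty$.

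Two specific defects in your route. First, the ``standard fact'' that positive-dimensional orbits have a uniform lower diameter bound (even on compact sets) is false: for $G=SO(2)$ rotating $\mathbb{R}^2$, points near the origin have positive-dimensional orbits that are arbitrarily small circles. The correct statement, which is exactly what the orthogonal decomposition encodes, is $\mathrm{diam}(Gx)\ge c\,\mathrm{dist}(x,(\mathbb{R}^N)^G)$, proved by homogeneity of the linear action plus compactness of the unit sphere of $((\mathbb{R}^N)^G)^{\perp}$; without it your exclusion of ``intermediate-scale orbits'' collapses, and indeed you concede you cannot close the sub-case $\varepsilon_k\not\to0$. Second, your covering/pigeonhole step produces group elements $g_1^{(k)},\dots,g_m^{(k)}$ depending on $k$, whereas the statement requires fixed $g_1,\dots,g_m$ working for all large $k$; passing to limits $g_i^{(k)}\to g_i$ in $G$ does not repair this when $|\xi_k|$ grows or $\varepsilon_k\to0$, since $|g_i^{(k)}\xi_k-g_i\xi_k|$ need not be $o(\varepsilon_k)$. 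The paper avoids both problems at once by choosing the $g_i$ for the limiting unit direction $y$.
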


\begin{proof}
Write $x_k = z_k + y_k$ with $z_k \in (\mathbb{R}^N)^G$ and $y_k \in \left((\mathbb{R}^N)^G\right)^{\bot}$.

If $(\varepsilon_k^{-1} y_k)$ contains a bounded subsequence, taking such a subsequence and setting $\xi_k:=z_k$ we obtain the statements \eqref{eq:orbits} and $(a)$.

If $(\varepsilon_k^{-1} y_k)$ does not contain a bounded subsequence, passing to a subsequence we have that $\varepsilon_k^{-1} y_k \neq 0$ and
$$\frac{\varepsilon_k^{-1} y_k}{|\varepsilon_k^{-1} y_k|} = \frac{y_k}{|y_k|} \to y \qquad\text{as }\,k \to \infty.$$
Since the $G$-orbit of every point which is not in $(\mathbb{R}^N)^G$ has positive dimension, for each $m \in \mathbb{N}$ there exist $g_1,...,g_m \in G$ such that $g_iy \neq g_jy$ if $i \neq j$. Hence, there exist $k_0 \in \mathbb{N}$ and $\delta >0$ such that 
$$\left|g_i \frac{y_k}{|y_k|} - g_j \frac{y_k}{|y_k|} \right| \geq \delta \qquad\text{for all }\,k\geq k_0\quad\text{if }\,i\neq j.$$
It follows that
$$\varepsilon_k^{-1}|g_ix_k -g_jx_k| \geq \varepsilon_k^{-1}|g_iy_k -g_jy_k| \geq \delta\varepsilon_k^{-1}|y_k| \to \infty.$$
Setting $\xi_k:=x_k$ we obtain the statements \eqref{eq:orbits} and $(b)$.
\end{proof}

\begin{theorem}
\label{thm:profile}
Assume $(\mathbf{S1})-(\mathbf{S3})$. Let $\Omega$ be a $G$-invariant bounded smooth domain in $\mathbb{R}^N$ and $(u_k)$ be a sequence such that
$$u_k\in D^{1,p}_0(\Omega)^{\phi},\quad J(u_k) \to c^{\phi}(\Omega),\quad \text{and }\quad J'(u_k) \to 0\, \text{ in }\,(D^{1,p}_0(\Omega)^{\phi})'.$$ 
Then, up to a subsequence, one of the following two possibilities occurs: 
\begin{enumerate}
\item[\emph{(I)}]either $(u_k)$ converges strongly in $D_0^{1,p}(\Omega)$ to a minimizer of $J$ on $\mathcal{N}^\phi(\Omega)$,
\item[\emph{(II)}]or there exist a sequence of $G$-fixed points $(\xi_k)$ in $\mathbb{R}^N$, a sequence $(\varepsilon_k) \in (0,\infty)$ and a nontrivial solution $W$ to the problem
\begin{equation}
\label{Hprob}
-\Delta_{p} w=|w|^{p^*-2}w, \qquad w\in D_0^{1,p}(\mathbb{H}),
\end{equation}
with the following properties:
	\begin{enumerate}
	\item[\emph{(i)}]$\varepsilon_k \to 0$,\, $\xi_k \to \xi$, \, $\xi\in (\bar{\Omega})^G$,\, and\, $\varepsilon_k^{-1}\mathrm{dist}(\xi_k,\Omega)\to d\in [0,\infty]$.
	\item[\emph{(ii)}]If $d=\infty$, then $\mathbb{H} = \mathbb{R}^N$ and \,$\xi_k \in \Omega$. 
	\item[\emph{(iii)}]If $d\in [0,\infty)$, then $\xi \in \partial \Omega$ and $\mathbb{H} = \{ x \in \mathbb{R}^N: x \cdot \nu > \bar d  \}$, where $\nu$ is the inward pointing unit normal to $\partial \Omega$ at $\xi$ and $\bar d \in \{ d,-d \}$. Moreover, $\mathbb{H}^G\neq\emptyset$ and $\Omega^G\neq\emptyset$.
	\item[\emph{(iv)}]$W\in\mathcal{N}^\phi(\mathbb{H})$ and $J(W)=c_{\infty}^{\phi}$. 
	\item[\emph{(v)}]$\lim\limits_{k\to\infty}\left\| u_k-\varepsilon_k^{-\frac{N-p}{p}} W\left( \frac{x-\xi_k}{\varepsilon_k}  \right) \right\|=0$.
	\end{enumerate}
\end{enumerate}
\end{theorem}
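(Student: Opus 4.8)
The plan is to carry out a concentration--compactness analysis of Struwe type, adapted to the quasilinear operator $\Delta_{p}$ and to the $\phi$-equivariant constraint; the new ingredient compared with the classical global compactness theorem is that $(\mathbf{S1})$ together with Lemma~\ref{lem:orbits} forces any concentration to take place, up to an $O(\varepsilon_k)$ error, on the fixed point set $(\mathbb{R}^N)^G$, which is precisely what makes the limiting bubble $\phi$-equivariant. I would begin with boundedness: from $J(u_k)-\tfrac1{p^*}J'(u_k)u_k=\tfrac1N\|u_k\|^p$ and the hypotheses one gets $\|u_k\|^p\to Nc^\phi(\Omega)$, which is positive by Lemma~\ref{lem:nehari}$(a)$; hence $(u_k)$ is bounded and, along a subsequence, $u_k\rightharpoonup u$ in $D_0^{1,p}(\Omega)^\phi$. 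Using the a.e.\ convergence of the gradients $\nabla u_k\to\nabla u$ (a Boccardo--Murat type argument, carried out in the appendix) one passes to the limit in the equation, so $u$ is a $\phi$-equivariant weak solution of \eqref{prob} in $\Omega$. If $u_k\to u$ strongly, then $\|u\|^p=Nc^\phi(\Omega)>0$, whence $u\in\mathcal{N}^\phi(\Omega)$ and $J(u)=\tfrac1N\|u\|^p=c^\phi(\Omega)$: this is alternative (I). From now on I assume the convergence is not strong.

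Next I would perform the blow-up. Put $v_k:=u_k-u$. By the Brezis--Lieb lemma, $\|v_k\|^p=\|u_k\|^p-\|u\|^p+o(1)$ and $|v_k|_{p^*}^{p^*}=|u_k|_{p^*}^{p^*}-|u|_{p^*}^{p^*}+o(1)$, and a quasilinear Brezis--Lieb argument for $J'$ (appendix) shows that $(v_k)$ is a $\phi$-equivariant Palais--Smale sequence for $J$ on $\mathbb{R}^N$ with $v_k\rightharpoonup0$ and $\liminf\|v_k\|>0$; combining $\|v_k\|^p=|v_k|_{p^*}^{p^*}+o(1)$ with Sobolev's inequality yields a universal lower bound $\liminf\|v_k\|^p\ge\beta_0>0$. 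Fix $\lambda\in(0,\beta_0)$. Since $v_k$ is supported in the bounded set $\bar{\Omega}$, for $k$ large there are $\varepsilon_k>0$ and a bounded sequence $(x_k)$, with $\varepsilon_k^{-1}\mathrm{dist}(x_k,\bar{\Omega})<1$, such that
$$\sup_{y\in\mathbb{R}^N}\int_{B_{\varepsilon_k}(y)}|\nabla v_k|^p=\int_{B_{\varepsilon_k}(x_k)}|\nabla v_k|^p=\lambda .$$
The rescalings $\tilde v_k(x):=\varepsilon_k^{\frac{N-p}{p}}v_k(\varepsilon_k x+x_k)$ have the same norm, and since $\lambda<\beta_0$, the local compactness of Palais--Smale sequences below the energy threshold (appendix) gives, along a subsequence, weak convergence in $D^{1,p}$ and strong convergence in $L^{p^*}_{\mathrm{loc}}$ to a weak solution of \eqref{prob} on a limiting domain; this limit is nontrivial because $\int_{B_1}|\nabla\tilde v_k|^p$ stays comparable to $\lambda$. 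As $(x_k)$ is bounded and $v_k\rightharpoonup0$, it follows that $\varepsilon_k\to0$.

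Then I would bring in the symmetry. Applying Lemma~\ref{lem:orbits} to $(\varepsilon_k,x_k)$ yields $G$-fixed points $\xi_k$ with $\varepsilon_k^{-1}\mathrm{dist}(Gx_k,\xi_k)\le C_0$. By $\phi$-equivariance, $v_k$ concentrates with mass $\lambda$ near every point of the orbit $Gx_k$, so alternative $(b)$ of Lemma~\ref{lem:orbits} would produce, for every $m$, $m$ mutually $\varepsilon_k$-separated concentration balls and hence $\liminf\|v_k\|^p\ge m\beta_0$ for all $m$, which is impossible; thus $(a)$ holds, $\xi_k\in(\mathbb{R}^N)^G$, $\varepsilon_k^{-1}\mathrm{dist}(\xi_k,\bar{\Omega})$ is bounded, and along a subsequence $\xi_k\to\xi\in(\bar{\Omega})^G$. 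Since $\xi_k$ is $G$-fixed, the recentered rescalings $W_k(x):=\varepsilon_k^{\frac{N-p}{p}}v_k(\varepsilon_k x+\xi_k)$ are again $\phi$-equivariant; because $|\xi_k-g_k x_k|\le C_0\varepsilon_k$ for suitable $g_k\in G$, passing to a further subsequence (convergence of the isometries $g_k$ and of the bounded translations $\varepsilon_k^{-1}(\xi_k-g_k x_k)$) we obtain $W_k\rightharpoonup W$, where $W$ arises from the previous limit by a bounded rotation, translation and sign change---so $W\neq0$---and $W$ is $\phi$-equivariant (weak limits of $\phi$-equivariant functions are $\phi$-equivariant) and a weak solution of $-\Delta_{p} w=|w|^{p^*-2}w$ on a limiting domain $\mathbb{H}$. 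To identify $\mathbb{H}$, set $\Omega_k:=\varepsilon_k^{-1}(\Omega-\xi_k)$ and $d:=\lim\varepsilon_k^{-1}\mathrm{dist}(\xi_k,\partial\Omega)\in[0,\infty]$. If $d=\infty$, then $\xi_k\in\Omega$ for large $k$ (otherwise $\Omega_k$ recedes to $\emptyset$, forcing $W=0$) and $\Omega_k\to\mathbb{R}^N=:\mathbb{H}$, which is (ii); if $d<\infty$, then $\xi_k$ lies within $O(\varepsilon_k)$ of $\partial\Omega$, so $\xi\in\partial\Omega$, and smoothness of $\partial\Omega$ gives $\Omega_k\to\mathbb{H}=\{x\in\mathbb{R}^N:x\cdot\nu>\bar d\}$, with $\nu$ the inward unit normal to $\partial\Omega$ at $\xi$ and $\bar d\in\{d,-d\}$ according to the side of $\xi_k$. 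Since each $g\in G$ fixes $\xi$ and maps $\Omega$ onto itself, it fixes $\nu$, so $\mathbb{H}$ is $G$-invariant with $t\nu\in\mathbb{H}^G$ whenever $t>\bar d$, while $\xi+t\nu\in\Omega^G$ for small $t>0$; and when $\mathbb{H}=\mathbb{R}^N$ the points $\xi_k\in\Omega^G$ already show $\Omega^G\neq\emptyset$. Hence $W$ solves \eqref{Hprob}.

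Finally, the energy accounting. With $w_k:=v_k-\varepsilon_k^{-\frac{N-p}{p}}W\left(\frac{x-\xi_k}{\varepsilon_k}\right)$, where $W$ is extended by zero, iterated Brezis--Lieb shows that $(w_k)$ is a $\phi$-equivariant Palais--Smale sequence and
$$c^\phi(\Omega)=J(u)+J(W)+\lim_{k\to\infty}J(w_k),$$
where $J(u)\ge0$, $\lim_k J(w_k)=\tfrac1N\lim_k\|w_k\|^p\ge0$, and $J(W)\ge c^\phi(\mathbb{H})\ge c_\infty^\phi>0$ (since $W\in\mathcal{N}^\phi(\mathbb{H})$, zero extension embeds $\mathcal{N}^\phi(\mathbb{H})$ into $\mathcal{N}^\phi(\mathbb{R}^N)$, and Lemma~\ref{lem:nehari}$(a)$). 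If $u\neq0$ then $J(u)\ge c^\phi(\Omega)$, which with $J(W)>0$ contradicts the identity; hence $u=0$. As $\Omega^G\neq\emptyset$, Lemma~\ref{lem:leastenergy} gives $c^\phi(\Omega)=c_\infty^\phi$, so $c_\infty^\phi=J(W)+\lim_k J(w_k)$ together with $J(W)\ge c_\infty^\phi$ and $\lim_k J(w_k)\ge0$ forces $J(W)=c_\infty^\phi$ and $\lim_k\|w_k\|^p=0$, i.e.\ $w_k\to0$ in $D^{1,p}(\mathbb{R}^N)$---which is exactly (v); together with the earlier steps this yields (i)--(iv). I expect the genuine difficulty to lie entirely in the quasilinear analysis: the a.e.\ convergence of the gradients needed to pass to weak limits in the equation, both for $(u_k)$ and for the rescaled sequences, and the local compactness of Palais--Smale sequences below the energy threshold---the substitutes for Calder\'on--Zygmund estimates and Rellich's theorem that are available when $p=2$---which I would isolate in the appendix; the remainder is essentially symmetric bookkeeping organized around Lemmas~\ref{lem:leastenergy} and~\ref{lem:orbits}.
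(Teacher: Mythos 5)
Your overall strategy is viable, but it is the classical Struwe global-compactness route, which is genuinely different from, and substantially heavier than, what the paper does --- and the one step you relegate to an unproved ``appendix'' is precisely the step that is problematic for the $p$-Laplacian. Specifically, you subtract the weak limit, set $v_k:=u_k-u$, and assert that a ``quasilinear Brezis--Lieb argument for $J'$'' shows $(v_k)$ is again a Palais--Smale sequence, and later you invoke an ``iterated Brezis--Lieb'' to get the energy identity $c^\phi(\Omega)=J(u)+J(W)+\lim_k J(w_k)$ together with the Palais--Smale property for the remainders. For $p=2$ these splittings are routine because $J'$ is linear plus a compact perturbation of the subtraction; for $p\neq 2$ the term $|\nabla u_k|^{p-2}\nabla u_k$ does not split additively, and establishing $J'(u_k)-J'(u)-J'(v_k)\to 0$ (and its iterate after removing a bubble) is a delicate piece of analysis in its own right --- it is known in the literature but it is not a one-line appendix item, and you give no indication of how to prove it. As written, the hardest part of your argument is assumed.

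The paper's proof is organized precisely to avoid this. It dichotomizes at the outset: if $u\neq 0$, then $u\in\mathcal{N}^\phi(\Omega)$, so $J(u)=\frac1N\|u\|^p\geq c^\phi(\Omega)$, while weak lower semicontinuity gives $\frac1N\|u\|^p\leq\liminf\frac1N\|u_k\|^p=c^\phi(\Omega)$; equality of norms plus weak convergence in the uniformly convex space $D_0^{1,p}(\Omega)$ yields strong convergence, i.e.\ alternative (I), with no remainder to analyze. If $u=0$, then $v_k=u_k$ and the rescaled functions $w_k(y)=\varepsilon_k^{(N-p)/p}u_k(\varepsilon_k y+\xi_k)$ are exact rescalings of $u_k$, so the identities $J'(w_k)\varphi_k=o(1)$ used to show that $W$ solves \eqref{Hprob} are just changes of variable --- no splitting of $J'$ is needed; the a.e.\ gradient convergence needed to pass to the limit is supplied by Lemma~\ref{lem:solution}. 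Finally, strong convergence $w_k\to W$ and statement (v) follow not from a remainder being a Palais--Smale sequence but from the minimality of the level: Lemma~\ref{lem:leastenergy} gives $c^\phi(\Omega)=c^\phi(\mathbb{H})=c_\infty^\phi$, so $c_\infty^\phi\leq\frac1N\|W\|^p\leq\liminf\frac1N\|w_k\|^p=c_\infty^\phi$, and again norm convergence plus uniform convexity concludes. Your symmetry bookkeeping (Lemma~\ref{lem:orbits}, the counting argument excluding alternative $(b)$, the identification of $\mathbb{H}$ and the fixed-point statements) matches the paper's; note only the small slip that each of the $m$ disjoint balls carries mass $\lambda$ (or $\delta$), not $\beta_0$, which still gives the contradiction. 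If you want to keep your route, you must actually prove the quasilinear Palais--Smale splitting; otherwise, restructure along the paper's lines, where the minimal-energy hypothesis makes at most one bubble possible and removes the need for it.
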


\begin{proof}
As $p>1$ and
\begin{equation}
\label{eq:bounded}
\frac{1}{N}\|u_k\|^p = J(u_k) - \frac{1}{p^*}J'(u_k)u_k \leq C+o(1)\|u_k\|,
\end{equation}
the sequence $(u_k)$ is bounded and, after passing to a subsequence, $u_k\rightharpoonup u$ weakly in $D_0^{1,p}(\Omega)^{\phi}$. Then, $J'(u)=0$; see Lemma \ref{lem:solution}.  We consider two cases:

(I) If $u\neq 0$, then $u\in \mathcal{N}^{\phi}(\Omega)$ and from \eqref{eq:bounded} and our assumptions we obtain
$$c^{\phi}(\Omega) \leq J(u) = \frac{1}{N}\|u\|^p \leq \liminf_{k\to\infty}\frac{1}{N}\|u_k\|^p = c^{\phi}(\Omega)+o(1).$$
Hence, $u_k \to u$ strongly in $D_0^{1,p}(\Omega)^{\phi}$ and $J(u)=c^{\phi}(\Omega)$.

(II) Assume that $u=0$. As
$$\int_{\Omega}|u_k|^{p^*} = N(J(u_k) - \frac{1}{p}J'(u_k)u_k) \to Nc^{\phi}(\Omega),$$
for a fixed $\delta \in (0,\frac{N}{2}c^\phi(\Omega))$ there are bounded sequences $(\varepsilon_k)$ in $(0,\infty)$ and $(x_k)$ in $\mathbb{R}^N$ such that, after passing to a subsequence,
\begin{equation*}
  \delta = \sup_{x\in\mathbb{R}^N}\int_{B_{\epsilon_k}(x)}|u_k|^{p^*}=\int_{B_{\epsilon_k}(x_k)}|u_k|^{p^*},
\end{equation*}
where $B_r(x):=\{z\in \mathbb{R}^N:|z-x|<r\}$. For these sequences we take $(\xi_k)$ as in Lemma \ref{lem:orbits}. Then, $|g_k x_k-\xi_k| \leq C_0\varepsilon_k$ for some $g_k \in G$ and, as $|u_k|$ is $G$-invariant, setting $C_1:=C_0+1$, we have that
\begin{equation}
\label{eq:delta}
  \delta=\int_{B_{\varepsilon_k}(g_k x_k)}|u_k|^{p^*} \leq \int_{B_{C_1\varepsilon_k}(\xi_k)}|u_k|^{p^*}.
\end{equation}
This implies, in particular, that
\begin{equation}
\label{eq:dist}
\mathrm{dist}(\xi_k,\Omega) \leq C_1\varepsilon_k.
\end{equation}

We claim that $\xi_k \in (\mathbb{R}^N)^G$. Otherwise, for each $m\in \mathbb{N}$, Lemma \ref{lem:orbits} would yield $m$ elements $g_1,...,g_m \in G$ such that $B_{C_1\varepsilon_k}(g_i \xi_k) \cap  B_{C_1\varepsilon_k}(g_j \xi_k)=\emptyset$ if $i \neq j$, for $k$ large enough, and from \eqref{eq:delta} we would get that
$$m\delta \leq \sum\limits_{i=1}^m \int_{B_{C_1\varepsilon_k}(g_i\xi_k)}|u_k|^{p^*} \leq \int_{\Omega}|u_k|^{p^*} = Nc^{\phi}(\Omega) + o(1),$$
for every $m\in \mathbb{N}$, which is a contradiction. This proves that $\xi_k \in (\mathbb{R}^N)^G$.

Define $\Omega_k:=\{y\in \mathbb{R}^N: \varepsilon_ky + \xi_k \in \Omega \}$ and, for $y \in \Omega_k$, set
\begin{equation*}
  w_k(y):=\varepsilon_k^\frac{N-p}{p}u_k(\varepsilon_ky + \xi_k).
\end{equation*}
As $u_k$ is $\phi$-equivariant and $\xi_k$ is a $G$-fixed point, $w_k$ is $\phi$-equivariant. Moreover $(w_k)$ is bounded in $D^{1,p}(\mathbb{R}^N)$. Hence, a subsequence satisfies that $w_k \rightharpoonup W$ weakly in $D^{1,p}(\mathbb{R}^N)^{\phi}$, $w_k \to W$ a.e. in $\mathbb{R}^N$ and $w_k \to W$ strongly in $L^{p^*}_{\mathrm{loc}}(\mathbb{R}^N)$. Note that $W$ is $\phi$-equivariant. Choosing $\delta$ sufficiently small and using \eqref{eq:delta}, a standard argument shows that $W\neq 0$; see, e.g., \cite[Section 8.3]{w}.

Passing to a subsequence, we have that $\xi_k \to \xi \in (\mathbb{R}^N)^G$ and $\varepsilon_k \to \varepsilon$. Moreover, $\varepsilon=0$; otherwise, as $u_k \rightharpoonup 0$ weakly in $D_0^{1,p}(\Omega)$, we would have that $W=0$. Furthermore,
\begin{equation*}
  \varepsilon_k^{-1} \mathrm{dist}(\xi_k,\partial\Omega) \to d \in [0,\infty] \quad \text{as } k \to \infty.
\end{equation*}
We consider two cases:
\begin{itemize}
\item[(a)] If $d=\infty$ then, by \eqref{eq:dist}, we have that $\xi_k \in\Omega$. Hence, for every compact subset $X$ of $\mathbb{R}^N$, there exists $k_0$ such that $X\subset \Omega_k$ for all $k\ge k_0$. In this case we set $\mathbb{H}:=\mathbb{R}^N$.
\item[(b)] If $d \in [0,\infty)$ then, as $\varepsilon_k \to 0$, we have that $\xi \in \partial \Omega$. If a subsequence of $(\xi_k)$ is contained in $\bar \Omega$ we set $\bar d:=-d$, otherwise we set $\bar d:=d$. We define 
  \begin{equation*}
   \mathbb{H} :=\{ y\in\mathbb{R}^N: y\cdot\nu > \bar d \},
  \end{equation*}
where $\nu$ is the inward pointing unit normal to $\partial \Omega$ at $\xi$. Since $\xi$ is a $G$-fixed point, so is $\nu$. Thus, $\Omega^G \neq\emptyset$, $\mathbb{H}$ is $G$-invariant and $\mathbb{H}^G \neq \emptyset$. It is easy to see that, if $X$ is compact and $X \subset \mathbb{H}$, there exists $k_0$ such that $X\subset \Omega_k$ for all $k\ge k_0$. Moreover, if  $X$ is compact and $X \subset \mathbb{R}^N \smallsetminus \bar{\mathbb{H}}$, then $X\subset \mathbb{R}^N \smallsetminus\Omega_k$ for $k$ large enough. As $w_k \to W$ a.e. in $\mathbb{R}^N$, this implies, in particular, that $W=0$ a.e. in $\mathbb{R}^N \smallsetminus \mathbb{H}$. So $W \in D_0^{1,p}(\mathbb{H})^{\phi}$.
\end{itemize}

If $\varphi,\psi\in\mathcal{C}_c^{\infty}(\mathbb{H})$, $\varphi$ is $\phi$-equivariant and $\psi$ is $G$-invariant, we define $\varphi_k (x):=\varepsilon_k^{-(N-p)/p}\varphi(\varepsilon_k^{-1}(x - \xi_k))$ and $\psi_k (x):=\varepsilon_k^{-(N-p)/p} [\psi T(w_k-W)](\varepsilon_k^{-1}(x-\xi_k))$, where $T$ is the truncation given by \eqref{eq:truncation}. Then, $\varphi_k$ and $\psi_k$ are $\phi$-equivariant. As $\mathrm{supp}(\varphi)\cup\mathrm{supp}(\psi)\subset \Omega_k$ for $k$ large enough, we have that $\mathrm{supp}(\varphi_k)\subset \Omega$ and $\mathrm{supp}(\psi_k)\subset \Omega$ for $k$ large enough and, since the sequences $(\varphi_k)$ and $(\psi_k)$ are bounded in $D^{1,p}_0(\Omega)^{\phi}$, we get that
\begin{align*}
&\int_{\Omega_k}|\nabla w_k|^{p-2}\nabla w_k \cdot \nabla \varphi - \int_{\Omega_k}|w_k|^{p^*-2}w_k\varphi = J'(w_k)\varphi_k = o(1),\\
&\int_{\Omega_k}|\nabla w_k|^{p-2}\nabla w_k \cdot \nabla [\psi T(w_k-W)] - \int_{\Omega_k}|w_k|^{p^*-2}w_k[\psi T(w_k-W)]\\
 &\qquad= J'(w_k)\psi_k= o(1).
\end{align*}
It follows from Lemma \ref{lem:solution} that $W$ is a nontrivial solution to \eqref{Hprob}. 

From Lemma \ref{lem:leastenergy} we obtain that $c^{\phi}(\Omega)=c^{\phi}(\mathbb{H})=c^{\phi}_{\infty}$. Hence,
$$c^{\phi}_{\infty} \leq \frac{1}{N}\|W\|^p \leq \liminf_{k\to\infty}\frac{1}{N}\|w_k\|^p = \liminf_{k\to\infty}\frac{1}{N}\|u_k\|^p = c^{\phi}_{\infty}.$$
Therefore, $J(W)=c^{\phi}_{\infty}$ and $w_k \to W$ strongly in $D^{1,p}(\mathbb{R}^N)$. After a change of variable,
$$o(1) = \|w_k - W\|^p = \|u_k - \varepsilon_k^{-(N-p)/p}W(\varepsilon_k^{-1}(x - \xi_k))\|^p.$$
This finishes the proof.
\end{proof}

\section{Entire nodal solutions}
\label{sec:existence}

In this section we prove our main result.

\begin{theorem}
\label{thm:existence}
Let $G$ be a closed subgroup of $O(N)$ and $\phi:G\rightarrow\mathbb{Z}_{2}$ be a continuous homomorphism which satisfy $\mathbf{(S1)}-\mathbf{(S3)}$. Then $J$ attains its minimum on $\mathcal{N}^\phi(\mathbb{R}^N)$. Consequently, the problem $(\ref{prob})$ has a nontrivial $\phi$-equivariant solution.
\end{theorem}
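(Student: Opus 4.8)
The plan is to reduce Theorem~\ref{thm:existence} to the representation theorem (Theorem~\ref{thm:profile}) by applying it inside a carefully chosen bounded $G$-invariant domain and then ruling out the ``bubbling'' alternative, or else converting it into a solution on $\mathbb{R}^N$. First I would pick the ball $\Omega = \mathbb{B}$; it is $G$-invariant since $G \subset O(N)$, and, because of $\mathbf{(S3)}$ together with Lemma~\ref{lem:leastenergy} applied with $x_0 = 0 \in \mathbb{B}^G$, we have $c^\phi(\mathbb{B}) = c^\phi(\mathbb{R}^N) = c_\infty^\phi$. By Lemma~\ref{lem:willem} there is a minimizing $\phi$-equivariant Palais--Smale sequence $(u_k)$ in $D_0^{1,p}(\mathbb{B})^\phi$ at level $c^\phi(\mathbb{B})$, so Theorem~\ref{thm:profile} applies.

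Next I would examine the two alternatives produced by Theorem~\ref{thm:profile}. In case (I), $(u_k)$ converges strongly to a minimizer $u$ of $J$ on $\mathcal{N}^\phi(\mathbb{B})$; extending $u$ by zero to all of $\mathbb{R}^N$ gives $u \in \mathcal{N}^\phi(\mathbb{R}^N)$ with $J(u) = c^\phi(\mathbb{B}) = c_\infty^\phi$, so $u$ is the desired minimizer and, being a critical point of $J$ on the natural constraint $\mathcal{N}^\phi(\mathbb{B})$, solves $-\Delta_p u = |u|^{p^*-2}u$ in $\mathbb{B}$, hence in $\mathbb{R}^N$ after trivial extension. In case (II) we obtain a nontrivial solution $W$ on a half-space or on $\mathbb{R}^N$ with $W \in \mathcal{N}^\phi(\mathbb{H})$ and $J(W) = c_\infty^\phi$. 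If $\mathbb{H} = \mathbb{R}^N$ we are immediately done: $W$ itself is a $\phi$-equivariant minimizer on $\mathcal{N}^\phi(\mathbb{R}^N)$. If $\mathbb{H}$ is a half-space, part (iii) of the theorem guarantees $\mathbb{H}^G \neq \emptyset$, and then I invoke Lemma~\ref{lem:leastenergy} (for the $G$-invariant domain $\mathbb{H}$, whose set of $G$-fixed points is nonempty) to conclude $c^\phi(\mathbb{H}) = c_\infty^\phi$; thus $W$ realizes $c^\phi(\mathbb{H})$, i.e., $W$ is a minimizer of $J$ on $\mathcal{N}^\phi(\mathbb{H})$. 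Extending $W$ by zero to $\mathbb{R}^N$, we get $W \in D_0^{1,p}(\mathbb{R}^N)^\phi$ with $W \in \mathcal{N}^\phi(\mathbb{R}^N)$ and $J(W) = c_\infty^\phi$, so $W$ attains the infimum $c^\phi(\mathbb{R}^N)$.

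It remains to check that in every case the minimizer actually solves the PDE on all of $\mathbb{R}^N$. Any $u \in \mathcal{N}^\phi(\mathbb{R}^N)$ with $J(u) = c^\phi(\mathbb{R}^N)$ minimizes $J$ on $\mathcal{N}^\phi(\mathbb{R}^N)$; since part~(b) of Lemma~\ref{lem:nehari} shows $\mathcal{N}^\phi(\mathbb{R}^N)$ is a natural constraint (the argument there works verbatim with $\Omega = \mathbb{R}^N$), $u$ is a critical point of $J$ on $D_0^{1,p}(\mathbb{R}^N)^\phi = D^{1,p}(\mathbb{R}^N)^\phi$, and then the principle of symmetric criticality (Lemma~\ref{lem:symcrit}) upgrades this to a critical point of $J$ on all of $D^{1,p}(\mathbb{R}^N)$, i.e., a genuine solution of $(\ref{prob})$. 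Surjectivity of $\phi$ makes $u$ nontrivial, nonradial, and sign-changing.

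The main obstacle is not any single step but making sure the half-space alternative genuinely collapses back to a minimizer on $\mathbb{R}^N$: this hinges entirely on part~(iii) of Theorem~\ref{thm:profile} supplying $\mathbb{H}^G \neq \emptyset$, which is what lets Lemma~\ref{lem:leastenergy} equate $c^\phi(\mathbb{H})$ with $c_\infty^\phi$ and hence certify $W$ as an energy minimizer rather than merely a solution of possibly higher energy. The subtlety that in the quasilinear case one cannot exclude nontrivial solutions in a half-space (for lack of a unique continuation principle) is irrelevant here, because we only need $W$ to be a \emph{minimizer}, and its trivial extension to $\mathbb{R}^N$ then automatically lies on $\mathcal{N}^\phi(\mathbb{R}^N)$ at the least energy level — so the existence conclusion is unaffected even if such half-space solutions exist.
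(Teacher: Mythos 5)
Your proposal is correct and follows essentially the same route as the paper: minimize over the ball, apply Theorem~\ref{thm:profile}, and observe that in either alternative the limit object lies in $\mathcal{N}^\phi(\Theta)\subset\mathcal{N}^\phi(\mathbb{R}^N)$ at level $c_\infty^\phi$, hence is a global minimizer (your extra appeal to Lemma~\ref{lem:leastenergy} in the half-space case is redundant, since part~(iv) of Theorem~\ref{thm:profile} already gives $J(W)=c_\infty^\phi$). One phrasing to fix: a solution in $\mathbb{B}$ does \emph{not} solve the equation in $\mathbb{R}^N$ merely ``after trivial extension''; the correct deduction is the one you give in your final paragraph, via the natural-constraint property of $\mathcal{N}^\phi(\mathbb{R}^N)$ and symmetric criticality applied to the extended minimizer.
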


\begin{proof}
The unit ball $\Omega:=\{x\in \mathbb{R}^N:|x|<1\}$ is $G$-invariant for every $G$. As $0\in \Omega$, we have that $\Omega^G \neq\emptyset$. So, by Lemma \ref{lem:leastenergy}, $c^\phi(\Omega)=c_{\infty}^\phi$.

By Lemma \ref{lem:willem} there exists a sequence  $(u_k)$ such that
$$u_k\in D^{1,p}_0(\Omega)^{\phi},\quad J(u_k) \to c^{\phi}(\Omega),\quad \text{and }\quad J'(u_k) \to 0\, \text{ in }\,(D^{1,p}_0(\Omega)^{\phi})'.$$ 
Then, Theorem \ref{thm:profile} asserts that there are two possibilities: either there exists $u\in \mathcal{N}^\phi(\Omega)$ with $J(u)=c_{\infty}^\phi$, or there exists $W\in \mathcal{N}^\phi(\mathbb{H})$ with $J(W)=c_{\infty}^\phi$. As $\mathcal{N}^\phi(\Theta)\subset\mathcal{N}^\phi(\mathbb{R}^N)$ for every $G$-invariant domain $\Theta$ in $\mathbb{R}^N$, in either case we conclude that $J$ attains its minimum on $\mathcal{N}^\phi(\mathbb{R}^N)$.
\end{proof}

It is worth noting that in the semilinear case $p=2$ the unique continuation principle excludes the possibility that a solution to the problem (\ref{prob}) vanishes in an open subset of $\mathbb{R}^N$. Therefore, if $\Omega^G\neq\emptyset$, option (II) with $\mathbb{H}=\mathbb{R}^N$ is the only possible option in Theorem \ref{thm:profile}; see \cite[Theorem 2.3]{c}. For other values of $p$ the validity of the unique continuation principle is an open question; see, e.g., \cite{gm}. So one cannot exclude the existence of solutions which vanish in an open subset of $\mathbb{R}^N$.

In order to prove our main result, we need to show that there are groups and homomorphisms with the properties stated in the following lemma. 

\begin{lemma}
\label{lem:examples}
Let $N=4n+m$ with $n\geq1$ and $m\in\{0,1,2,3\}$. Then, for each $j=1,...,n$, there exist a a closed subgroup $G_j$ of $O(N)$ and a continuous homomorphism $\phi_j:G_j\rightarrow\mathbb{Z}_{2}$ with the following properties:
\begin{itemize}
\item[\emph{(a)}]$G_j$ and $\phi_j$ satisfy $\mathbf{(S1)}-\mathbf{(S3)}$.
\item[\emph{(b)}]If $u,v:\mathbb{R}^N \to \mathbb{R}$ are nontrivial functions, $u$ is $\phi_i$-equivariant and $v$ is $\phi_j$-equivariant with $i<j$, then $u\neq v$.
\end{itemize}
\end{lemma}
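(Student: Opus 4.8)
The plan is to construct, for each $j=1,\dots,n$, an explicit pair $(G_j,\phi_j)$ by exploiting the product structure $\mathbb{R}^N = \mathbb{R}^{2j}\times\mathbb{R}^{2(n-j)}\times\mathbb{R}^m$, or more precisely a decomposition of $\mathbb{R}^{4n}$ into $2n$ copies of $\mathbb{R}^2\cong\mathbb{C}$, so that $\mathbb{R}^{4n}\cong\mathbb{C}^{2n}$. On $\mathbb{C}^{2n}$ I would let the circle group $\mathbb{S}^1$ act diagonally by scalar multiplication, $e^{i\theta}\cdot z = e^{i\theta}z$; this action has no nonzero fixed points, so together with the trivial action on the remaining $\mathbb{R}^m$ factor, every orbit of a point with nonzero $\mathbb{C}^{2n}$-component is a circle (positive dimension), while points with zero $\mathbb{C}^{2n}$-component are fixed --- this is exactly $(\mathbf{S1})$. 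To introduce the sign-changing symmetry I would adjoin an involution: split the $2n$ complex coordinates into a first block of $j$ and a second block of $2n-j$ (or some index-dependent split; the key is that different $j$ give genuinely different splittings) and let $\tau_j$ act by swapping an appropriate pair of blocks (when the blocks have equal size) composed with multiplication by $i$ on one block, arranged so that $\tau_j^2$ lies in the $\mathbb{S}^1$ and $\tau_j$ normalizes $\mathbb{S}^1$; then set $G_j := \langle \mathbb{S}^1, \tau_j\rangle$ and define $\phi_j$ to be trivial on $\mathbb{S}^1$ and $-1$ on $\tau_j$. Surjectivity of $\phi_j$ is $(\mathbf{S2})$ by construction. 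For $(\mathbf{S3})$ I would pick a point $\xi$ lying on one of the coordinate axes (so its $\mathbb{S}^1$-orbit is a full circle and its $G_j$-isotropy is a finite group) and verify that its isotropy subgroup consists only of rotations, i.e.\ is contained in $\ker\phi_j$; since $\tau_j$ moves $\xi$ off its orbit when the blocks are chosen disjointly, no element outside $\ker\phi_j$ fixes $\xi$.

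For part (a) the verification is then a matter of checking the three axioms for the concrete groups, which is the routine part modeled on the constructions in \cite{c,bcm}. The only subtlety is making sure the involutions $\tau_j$ are well-defined elements of $O(N)$ with $\tau_j^2\in\mathbb{S}^1$ and that $\mathbb{S}^1$ is normal in $G_j$, so that $G_j$ is genuinely a closed subgroup and $\phi_j$ is genuinely a homomorphism (one needs $\phi_j(\tau_j g\tau_j^{-1})=\phi_j(g)$, automatic here since $\phi_j$ kills all of $\mathbb{S}^1$ and $\tau_j^2\in\mathbb{S}^1$).

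For part (b), the strategy is to show that the symmetry type forces $u$ and $v$ to differ. The cleanest way: associate to each index $j$ the linear subspace $Z_j := (\mathbb{R}^N)^{G_j}$ of $G_j$-fixed points, or rather the nodal/vanishing structure a $\phi_j$-equivariant function must have. A nontrivial $\phi_j$-equivariant function $u$ must vanish on the fixed-point set of any element $g$ with $\phi_j(g)=-1$, in particular on $\mathrm{Fix}(\tau_j)$; and the subspaces $\mathrm{Fix}(\tau_i)$ and $\mathrm{Fix}(\tau_j)$ are different subspaces of $\mathbb{R}^N$ for $i\neq j$ because the block splittings differ. Concretely, I would exhibit for $i<j$ a point $x_0$ with $u(x_0)=0$ forced by $\phi_i$-equivariance (because $x_0\in\mathrm{Fix}(\sigma)$ for some $\sigma\in G_i$ with $\phi_i(\sigma)=-1$) while $v(x_0)\neq 0$ is allowed, together with a point $x_1$ where the roles are reversed, or more simply a single point where one function is forced to vanish and the other is forced to be nonzero at some point in its orbit; since both are assumed nontrivial, $\phi_j$-equivariance lets one choose $v$ not identically zero on the relevant sphere, giving $u\neq v$. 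Actually the slickest argument: if $u=v$ then $u$ is simultaneously $\phi_i$- and $\phi_j$-equivariant, hence equivariant for the group $\langle G_i,G_j\rangle$ with the homomorphism that restricts to $\phi_i$ and $\phi_j$; one shows this combined group is so large (its orbits so big, or its "$-1$-fixed-point set" so large) that the only such function is $0$, contradicting nontriviality. I would make this precise by choosing the $\tau_j$ so that $\langle \mathbb{S}^1,\tau_i,\tau_j\rangle$ for $i\neq j$ contains an element $g$ with $\phi_i(g)\phi_j(g)\neq$ the value consistent with a common equivariant function — e.g.\ an element that must be sent to both $1$ and $-1$ — so no nonzero common equivariant function exists.

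The main obstacle I anticipate is part (b): axiom-checking in (a) is standard, but the non-coincidence argument requires choosing the involutions $\tau_j$ with enough care that the pairwise-combined groups $\langle G_i,G_j\rangle$ obstruct any common nontrivial equivariant function, while each individual $G_j$ still satisfies $(\mathbf{S3})$ (so the individual equivariant spaces are infinite-dimensional and nonempty). Balancing "each $G_j$ small enough to admit equivariant functions" against "each pair $G_i,G_j$ large enough to admit none in common" is the delicate design point; I expect it is handled, as in \cite{c}, by using for the $j$-th construction a rotation by $2\pi/q_j$ in one coordinate plane for a well-chosen integer $q_j$ (or a reflection in a $j$-dependent hyperplane) so that the fixed-point data, and hence the forced nodal sets, are manifestly distinct across $j$, and a short direct computation produces, for each pair $i<j$, the offending group element.
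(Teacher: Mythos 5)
Your high-level strategy for part (b) --- if $u=v$ then $u$ is equivariant for both homomorphisms, so it suffices to exhibit a single group element on which $\phi_i$ takes the value $+1$ and $\phi_j$ the value $-1$ --- is exactly the mechanism the paper uses, and your setup for part (a) (diagonal circle actions on complex coordinates plus a sign-flipping involution with square in the circle) is in the right spirit. But there is a genuine gap: your concrete construction does not produce that common element, and you acknowledge you do not know how to arrange it. With $G_j=\langle\mathbb{S}^1,\tau_j\rangle=\mathbb{S}^1\cup\tau_j\mathbb{S}^1$, the intersection $G_i\cap G_j$ for $i\neq j$ is in general just the common circle, on which both $\phi_i$ and $\phi_j$ are trivial; the element $\tau_j$ need not belong to $G_i$ at all, so no contradiction arises from assuming $u=v$. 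Your fallback via nodal sets is also insufficient as stated: knowing that $u$ is forced to vanish on $\mathrm{Fix}(\tau_i)$ and $v$ on $\mathrm{Fix}(\tau_j)$, with these subspaces distinct, does not preclude $u=v$ (both could vanish on the union), and since $v$ is a given function rather than one you construct, you cannot ``choose $v$ not identically zero'' anywhere.

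The missing design idea, which is the whole point of the paper's construction, is to make $G_i$ deliberately large on the coordinates it does not use for sign-changing. The paper writes $\mathbb{R}^N\equiv(\mathbb{C}^2)^j\times\mathbb{R}^{N-4j}$ and sets $G_j:=\Gamma^j\times O(N-4j)$ (with $O(0)$ replaced by $\{1\}$ when $j=n$), where $\Gamma\subset O(4)$ is generated by the diagonal circle and $\varrho(\zeta_1,\zeta_2)=(-\bar\zeta_2,\bar\zeta_1)$, and $\phi_j$ is the product of the sign characters of the $j$ copies of $\Gamma$, trivial on the $O(N-4j)$ factor. Then for $i<j$ the map $(z_1,\dots,z_j,y)\mapsto(z_1,\dots,z_{j-1},\varrho z_j,y)$ lies in the $\phi_i$-trivial factor $\{1\}^i\times O(N-4i)$ of $G_i$ \emph{and} in $G_j$ with $\phi_j$-value $-1$; evaluating a putative common function at a point where it is nonzero gives $u(x')=u(x)=v(x)=-v(x')\neq v(x')$, the desired contradiction. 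Without building this ``spare'' orthogonal factor into each $G_i$ (or some equivalent device guaranteeing a shared element with conflicting signs), part (b) does not go through, so the proposal as written does not prove the lemma.
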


\begin{proof}
Let $\Gamma$ be the group generated by $\{\mathrm{e}^{\mathrm{i}\theta},\varrho: \theta \in [0,2\pi) \}$, acting on $\mathbb{C}^2$ by
$$\mathrm{e}^{\mathrm{i}\theta}(\zeta_1,\zeta_2):=(\mathrm{e}^{\mathrm{i}\theta}\zeta_1,\mathrm{e}^{\mathrm{i}\theta}\zeta_2), \qquad \varrho(\zeta_1,\zeta_2):=(-\bar{\zeta_2},\bar{\zeta_1}), \qquad \text{for} \quad (\zeta_1,\zeta_2) \in \mathbb{C}^2,$$
and let $\phi:\Gamma \to \mathbb{Z}_2$ be the homomorphism given by $\phi(\mathrm{e}^{\mathrm{i}\theta}):=1$ and $\phi(\varrho):=-1$. Note that the $\Gamma$-orbit of a point $z \in \mathbb{C}^2$ is the union of two circles that lie in orthogonal planes if $z \neq 0$, and it is $\{0\}$ if $z=0$.

Set $\Lambda_j:=O(N-4j)$ if $j=1,...,n-1$, and $\Lambda_n:=\{1\}$. Then the $\Lambda_j$-orbit of a point $y \in \mathbb{R}^{N-4j}$ is an $(N-4j-1)$-dimensional sphere if $j=1,...,n-1$, and it is a single point if $j=n$. 

Define $G_j:=\Gamma^j \times \Lambda_j$, acting coordinatewise on $\mathbb{R}^N \equiv (\mathbb{C}^2)^j \times \mathbb{R}^{N-4j}$, i.e.,
$$(\gamma_1,...,\gamma_j,\eta)(z_1,...,z_j,y):=(\gamma_1 z_1,...,\gamma_j z_j,\eta y),$$
where $\gamma_i \in \Gamma$, $\eta \in \Lambda_j$, $z_i \in \mathbb{C}^2$ and $y \in \mathbb{R}^{N-4j}$, and let $\phi_j:G_j\rightarrow\mathbb{Z}_{2}$ be the homomorphism
$$\phi_j (\gamma_1,...,\gamma_j,\eta):= \phi(\gamma_1) \cdots \phi(\gamma_j).$$
The $G_j$-orbit of $(z_1,...,z_j,y)$ is the product of orbits 
$$G_j(z_1,...,z_j,y) =  \Gamma z_1 \times \cdots \times \Gamma z_j \times \Lambda_j y.$$ 
So, clearly, $G_j$ and $\phi_j$ satisfy $\mathbf{(S1)}-\mathbf{(S3)}$ for each $j=1,...,n$.

Now we prove (b). If $u$ is $\phi_i$-equivariant and $v$ is $\phi_j$-equivariant with $i<j$, and $u(x)=v(x) \neq 0$ for some $x=(z_1,...,z_j,y) \in (\mathbb{C}^2)^j \times \mathbb{R}^{N-4j}$, then, as
$$u(z_1,...,\varrho z_j,y) = u(z_1,...,z_j,y) \quad \text{and} \quad v(z_1,...,\varrho  z_j,y) = -v(z_1,...,z_j,y),$$
we have that $u(z_1,...,\varrho z_j,y) \neq v(z_1,...,\varrho z_j,y)$. This proves that $u\neq v$.
\end{proof} \vspace{4pt}

\begin{proof}[Proof of Theorem \ref{thm:main}]
Let $N=4n+m$ with $n\geq1$ and $m\in\{0,1,2,3\}$. For each $j=1,...,n$, let $G_j$ be the closed subgroup of $O(N)$ and $\phi_j:G_j \to \mathbb{Z}_{2}$ be the continuous homomorphism given by Lemma \ref{lem:examples}. Let $W_j$ be the $\phi_j$-equivariant solution of the problem $(\ref{prob})$ given by Theorem \ref{thm:existence}. Lemma \ref{lem:examples} asserts that the solutions $W_1,...,W_n$ are pairwise distinct.
\end{proof}

Theorem \ref{thm:main} is certainly not optimal. As the proof of Lemma \ref{lem:examples} indicates, there are other possible symmetries which yield further solutions.

\appendix

\section{Appendix}
\label{sec:appendix}

Here we prove Lemma \ref{lem:solution}, which was used in the proof of Theorem \ref{thm:profile}.

Let $\Theta$ be a $G$-invariant domain in $\mathbb{R}^N$. Set
\begin{align*}
\mathcal{C}_c^{\infty}(\Theta)^{\phi}&:=\{\varphi\in\mathcal{C}_c^{\infty}(\Theta):\varphi\text{ is }\phi\text{-equivariant}\},\\
\mathcal{C}_c^{\infty}(\Theta)^G&:=\{\psi\in\mathcal{C}_c^{\infty}(\Theta):\psi\text{ is }G\text{-invariant}\}.
\end{align*}
Recall that $\psi$ is $G$-invariant if it is constant on every $G$-orbit of $\Theta$.

\begin{lemma}
\label{lem:symcrit}
If $u\in D^{1,p}_0(\Theta)^{\phi}$ and $J'(u)\varphi = 0$ for every $\varphi \in \mathcal{C}_c^{\infty}(\Theta)^{\phi}$, then $J'(u)\vartheta = 0$ for every $\vartheta \in \mathcal{C}_c^{\infty}(\Theta)$, i.e., $u$ is a solution to the problem
\begin{equation}
\label{eq:symmprob}
-\Delta_{p} u=|u|^{p^*-2}u, \qquad u\in D_0^{1,p}(\Theta).
\end{equation}
\end{lemma}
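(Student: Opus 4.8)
The plan is to run an averaging argument over the group $G$, which is just a hands‑on version of Palais' principle of symmetric criticality. Since $G$ is a closed subgroup of the compact group $O(N)$, it is itself a compact group; fix a normalized Haar measure $\mu$ on $G$, so that $\mu(G)=1$. Observe also that $\phi\colon G\to\mathbb{Z}_2$, being continuous into a discrete group, is locally constant, hence $\mathcal{C}^\infty$; in particular $\ker\phi$ is open and closed in $G$, and $\phi(g)^2=1$ for all $g$.

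Given an arbitrary $\vartheta\in\mathcal{C}_c^\infty(\Theta)$, I would introduce its $\phi$\emph{-symmetrization}
$$\widehat\vartheta(x):=\int_G \phi(g)\,\vartheta(g^{-1}x)\,d\mu(g),$$
and first check that $\widehat\vartheta\in\mathcal{C}_c^\infty(\Theta)^\phi$. Smoothness follows by differentiating under the integral sign, which is legitimate because the integrand and all its $x$-derivatives are continuous in $(g,x)$ and compactly supported in $x$, uniformly for $g$ in the compact set $G$. The support of $\widehat\vartheta$ is contained in $G\cdot\mathrm{supp}\,\vartheta$, a compact subset of the $G$-invariant open set $\Theta$. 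Finally, the substitution $g\mapsto hg$ in the defining integral, together with $\phi(hg)=\phi(h)\phi(g)$ and the left-invariance of $\mu$, gives $\widehat\vartheta(hx)=\phi(h)\widehat\vartheta(x)$ for every $h\in G$, so $\widehat\vartheta$ is $\phi$-equivariant.

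The heart of the proof is the identity $J'(u)\vartheta=J'(u)\widehat\vartheta$. I would expand $J'(u)\widehat\vartheta$ into its gradient term $\int_\Theta|\nabla u|^{p-2}\nabla u\cdot\nabla\widehat\vartheta$ and its lower-order term $\int_\Theta|u|^{p^*-2}u\,\widehat\vartheta$, use Fubini to pull the $\mu$-integral outside, and then, for each fixed $g\in G$, perform the measure-preserving change of variables $x\mapsto gx$ (valid since $g\in O(N)$ and $\Theta$ is $G$-invariant). The transformation rules used are that $\nabla\bigl(\vartheta(g^{-1}\cdot)\bigr)(x)=g\,(\nabla\vartheta)(g^{-1}x)$ because $g$ is orthogonal, and that $\phi$-equivariance of $u$ gives $u(gx)=\phi(g)u(x)$ and $(\nabla u)(gx)=\phi(g)\,g\,(\nabla u)(x)$ a.e., so that $|u|$ and $|\nabla u|$ are $G$-invariant. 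Since $g$ preserves the Euclidean inner product and $\phi(g)^2=1$, each inner integral collapses to the corresponding integral with $\widehat\vartheta$ replaced by $\vartheta$, independently of $g$; integrating over $G$ and using $\mu(G)=1$ yields $J'(u)\widehat\vartheta=J'(u)\vartheta$.

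With this in hand the conclusion is immediate: $\widehat\vartheta\in\mathcal{C}_c^\infty(\Theta)^\phi$, so the hypothesis gives $J'(u)\widehat\vartheta=0$, whence $J'(u)\vartheta=0$ for every $\vartheta\in\mathcal{C}_c^\infty(\Theta)$; this is precisely the assertion that $u$ solves \eqref{eq:symmprob}. The only step requiring genuine care is the change-of-variables bookkeeping in the core identity — tracking how the gradient transforms under the orthogonal action and the sign cancellations coming from $\phi(g)^2=1$; everything else is routine.
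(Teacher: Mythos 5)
Your proposal is correct and follows essentially the same route as the paper: symmetrize $\vartheta$ against the Haar measure with the weight $\phi(g)$, verify that the result lies in $\mathcal{C}_c^\infty(\Theta)^\phi$, and use Fubini plus the orthogonal change of variables and the $\phi$-equivariance of $u$ to show $J'(u)\widehat\vartheta=J'(u)\vartheta$. The only cosmetic difference is that you average $\vartheta(g^{-1}x)$ where the paper averages $\vartheta(gx)$; these agree up to the inversion-invariance of the Haar measure and $\phi(g^{-1})=\phi(g)$.
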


\begin{proof}
Let  $\vartheta\in\mathcal{C}_{c}^{\infty}(\Theta)$. Define
$$\varphi(x):=\frac{1}{\mu(G)}\int_{G}\phi(g)\vartheta(gx)\,\mathrm{d}\mu,$$
where $\mu$ is the Haar measure on $G$. Then $\varphi\in\mathcal{C}_c^{\infty}(\Theta)^{\phi}$ and, therefore, $J'(u)\varphi = 0$. Note that, as $u$ is $\phi$-equivariant, $\phi(g)\nabla u(x) = g^{-1}\nabla u(gx)$ for all $g\in G$ and $x\in\Theta$. So, using Fubini's theorem and performing a change of variable, we
get
\begin{align*}
0 &= \int_{\Theta}\left(|\nabla u(x)|^{p-2}\nabla u(x)\cdot\nabla\varphi(x)-|u(x)|^{p^{*}-2}u(x)\varphi(x)\right)\,\mathrm{d}x \\
& = \frac{1}{\mu(G)}\int_{\Theta}\int_{G}\left(|\nabla u(x)|^{p-2}\nabla u(x)\cdot \phi(g)g^{-1}\nabla\vartheta(gx)\right.\\
& \qquad\qquad\qquad\qquad -\left.|u(x)|^{p^{*}-2}u(x)\phi(g)\vartheta(gx)\right)\mathrm{d}\mu\,\mathrm{d}x \\
& = \frac{1}{\mu(G)}\int_{\Theta}\int_{G}\left(|\nabla u(gx)|^{p-2}g^{-1}\nabla u(gx)\cdot g^{-1}\nabla\vartheta(gx)\right.\\
& \qquad\qquad\qquad\qquad -\left.|u(gx)|^{p^{*}-2}u(gx)\vartheta(gx)\right)\mathrm{d}\mu\,\mathrm{d}x\\
& = \frac{1}{\mu(G)}\int_{G}\int_{\Theta}\left(|\nabla u(gx)|^{p-2}\nabla u(gx)\cdot \nabla\vartheta(gx)\right.\\
& \qquad\qquad\qquad\qquad -\left.|u(gx)|^{p^{*}-2}u(gx)\vartheta(gx)\right)\mathrm{d}x\,\mathrm{d}\mu\\
& = \frac{1}{\mu(G)}\int_{G}\mathrm{d}\mu\int_{\Theta}\left(|\nabla u(y)|^{p-2}\nabla u(y)\cdot \nabla\vartheta(y) - |u(y)|^{p^{*}-2}u(y)\vartheta(y)\right)\mathrm{d}y \\
& = \int_{\Theta}\left(|\nabla u(y)|^{p-2}\nabla u(y)\cdot \nabla\vartheta(y) - |u(y)|^{p^{*}-2}u(y)\vartheta(y)\right)\mathrm{d}y,
\end{align*}
i.e., $0=J'(u)\varphi =J'(u)\vartheta$, as claimed.
\end{proof}

Consider the truncation function
\begin{equation}
\label{eq:truncation}
T(t):=
  \left\{
  \begin{aligned}
   & t && \text{if } |t|\leq 1,\\
   & \frac{t}{|t|} && \text{if } |t| \geq 1.
  \end{aligned}
  \right.
\end{equation}
The proof of the following lemma is similar to that of \cite[Lemma 3.5]{ct}. We give the details for the sake of completeness.

\begin{lemma}
\label{lem:ct}
Let $(v_k)$ be a sequence in $D^{1,p}(\mathbb{R}^N)^{\phi}$ and $v\in D_0^{1,p}(\Theta)^{\phi}$ be such that $v_k \rightharpoonup v$ weakly in $D^{1,p}(\mathbb{R}^N)$. Assume that, for every $\psi \in \mathcal{C}_c^{\infty}(\Theta)^G$,
\begin{equation}
\label{eq:ct}
\lim_{k\to\infty}\int_{\Theta}\psi\left(|\nabla v_k|^{p-2}\nabla v_k - |\nabla v|^{p-2}\nabla v\right) \cdot \nabla(T(v_k-v)) = 0.
\end{equation}
Then, after passing to a subsequence, $\nabla v_k \to \nabla v$ a.e. in $\Theta$.
\end{lemma}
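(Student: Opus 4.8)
The plan is to reduce Lemma~\ref{lem:ct} to a pointwise statement via the classical Boccardo--Murat--type trick, using the strict monotonicity of $\xi \mapsto |\xi|^{p-2}\xi$. Set $e_k := \left(|\nabla v_k|^{p-2}\nabla v_k - |\nabla v|^{p-2}\nabla v\right)\cdot\nabla(v_k-v) \geq 0$. The goal is to show that $e_k \to 0$ in $L^1_{\mathrm{loc}}(\Theta)$ (after a subsequence, almost everywhere), since the strict monotonicity inequality then forces $\nabla v_k(x) \to \nabla v(x)$ at every point where $e_k(x)\to 0$. The standard elementary inequality here is: for $p\geq 2$, $\bigl(|\xi|^{p-2}\xi - |\eta|^{p-2}\eta\bigr)\cdot(\xi-\eta)\geq c_p|\xi-\eta|^p$; for $1<p<2$ one uses instead $|\xi-\eta|^p \leq C_p\bigl[(|\xi|^{p-2}\xi-|\eta|^{p-2}\eta)\cdot(\xi-\eta)\bigr]^{p/2}(|\xi|^p+|\eta|^p)^{(2-p)/2}$ together with H\"older's inequality and the boundedness of $(v_k)$ in $D^{1,p}$; either way, $e_k\to 0$ in $L^1_{\mathrm{loc}}$ yields $\nabla v_k\to\nabla v$ in measure on compact subsets, hence a.e.\ along a subsequence.

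The work is therefore to pass from the hypothesis \eqref{eq:ct}, which involves the truncation $T(v_k-v)$ tested against $\phi$\emph{-equivariant} cutoffs $\psi$, to control of $\int_K e_k$ for an arbitrary compact $K\subset\Theta$. First I would fix $\psi\in\mathcal{C}_c^\infty(\Theta)^G$ with $0\leq\psi\leq1$ and $\psi\equiv1$ on $K$; such $\psi$ exists because $\Theta$ is $G$-invariant (average a bump function over $G$ as in the proof of Lemma~\ref{lem:symcrit}, or note $\mathrm{dist}(\cdot,\Theta^c)$ is $G$-invariant). Split $\Theta$ into $A_k:=\{|v_k-v|\leq1\}$, where $\nabla T(v_k-v)=\nabla(v_k-v)$, and its complement $B_k$, where $\nabla T(v_k-v)=0$. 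On $A_k$ the integrand in \eqref{eq:ct} is exactly $\psi\,e_k\geq 0$, so
\begin{equation*}
\int_{A_k\cap K} e_k \;\leq\; \int_{A_k}\psi\,e_k \;=\; \int_{\Theta}\psi\bigl(|\nabla v_k|^{p-2}\nabla v_k-|\nabla v|^{p-2}\nabla v\bigr)\cdot\nabla(T(v_k-v)) \;\longrightarrow\; 0.
\end{equation*}
It remains to handle $B_k\cap K$. Here one shows $|B_k\cap K|\to 0$: since $v_k\rightharpoonup v$ in $D^{1,p}$ implies, by Rellich on bounded pieces and $p<p^*$, that $v_k\to v$ in $L^p_{\mathrm{loc}}$ (passing to a further subsequence, a.e.), Chebyshev gives $|B_k\cap K|\leq \int_K|v_k-v|^p\to 0$. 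Then $\int_{B_k\cap K} e_k \leq \bigl(\int_K e_k^{1}\cdot\mathbf{1}_{B_k}\bigr)$ is estimated by H\"older using the uniform $L^{p/(p-1)}$-type bound on $|\nabla v_k|^{p-1}$ and the $L^p$ bound on $\nabla(v_k-v)$ over $K$, all finite by boundedness of $(v_k)$ in $D^{1,p}$, times $|B_k\cap K|^{1/N}$ or a similar small power — so this term also vanishes. Combining, $\int_K e_k\to 0$.

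The main obstacle I anticipate is not the monotonicity machinery, which is routine, but making sure the cutoff $\psi$ can be taken $G$-invariant while still equal to $1$ on a prescribed compact set and supported in $\Theta$ — this is exactly why the hypothesis is phrased with $\mathcal{C}_c^\infty(\Theta)^G$ rather than $\mathcal{C}_c^\infty(\Theta)$, and it works precisely because $\Theta$ is $G$-invariant, so $G$-orbits of compact subsets of $\Theta$ stay in $\Theta$ at positive distance from $\partial\Theta$; averaging a standard cutoff over the compact group-part and using a $G$-invariant exhaustion handles it. A secondary subtlety is the $1<p<2$ case of the monotonicity inequality, where one must be careful that the weight $(|\nabla v_k|^p+|\nabla v|^p)^{(2-p)/2}$ is controlled in $L^{2/(2-p)}_{\mathrm{loc}}$ — but this is again immediate from the uniform $D^{1,p}$ bound. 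Everything else (Fubini/change of variables, extracting the diagonal subsequence so that all the "a.e." and "$L^p_{\mathrm{loc}}$" convergences hold simultaneously) is bookkeeping.
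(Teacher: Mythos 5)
Your overall architecture (monotonicity of $\xi\mapsto|\xi|^{p-2}\xi$, the split into $A_k=\{|v_k-v|\le 1\}$ where $\nabla T(v_k-v)=\nabla(v_k-v)$ and its complement $B_k$, the observation that $|B_k\cap K|\to 0$ via $L^p_{\mathrm{loc}}$ convergence, and the $G$-invariant cutoff) matches the paper's. But there is a genuine gap in your treatment of $B_k\cap K$. You claim $\int_{B_k\cap K}e_k\to 0$ by H\"older, using the $L^{p/(p-1)}$ bound on $|\nabla v_k|^{p-2}\nabla v_k$, the $L^p$ bound on $\nabla(v_k-v)$, ``times $|B_k\cap K|^{1/N}$ or a similar small power.'' There is no such small power available: the exponents $\tfrac{p-1}{p}+\tfrac{1}{p}=1$ are already saturated, so H\"older gives only that $\int_{B_k}e_k$ is \emph{bounded}, not small. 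Since $(e_k)$ is bounded in $L^1$ with no equi-integrability or higher integrability to spare, concentration of $e_k$ on the shrinking sets $B_k$ cannot be ruled out this way, and your claim that $e_k\to 0$ in $L^1_{\mathrm{loc}}$ — which is stronger than what is actually needed or provable here — does not follow.

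The missing idea is precisely the power-$s$ device of Boccardo--Murat, which you name but do not implement. The paper fixes $s\in(0,1)$ and estimates
\begin{equation*}
\int_\Theta(\psi e_k)^s=\int_{A_k}(\psi e_k)^s+\int_{B_k}(\psi e_k)^s\le |A_k|^{1-s}\Bigl(\int_{A_k}\psi e_k\Bigr)^{s}+|B_k|^{1-s}\Bigl(\int_{B_k}\psi e_k\Bigr)^{s}.
\end{equation*}
The first term vanishes by the hypothesis \eqref{eq:ct}, and in the second the factor $|B_k|^{1-s}\to 0$ multiplies a quantity that is merely bounded — this is exactly where the sublinear power buys the slack your H\"older estimate lacks. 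One concludes $(\psi e_k)^s\to 0$ in $L^1$, hence $e_k\to 0$ a.e.\ on $\mathrm{supp}(\psi)$ along a subsequence, and then the monotonicity inequality and a diagonal argument over a $G$-invariant exhaustion finish the proof. Your reduction from $e_k\to 0$ a.e.\ to $\nabla v_k\to\nabla v$ a.e.\ (including the separate treatment of $1<p<2$) is fine, as is your construction of the $G$-invariant cutoff; only the passage through $L^1_{\mathrm{loc}}$ convergence of $e_k$ needs to be replaced.
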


\begin{proof}
From the inequalities (4.3) and (4.4) in \cite{l0} we obtain that
\begin{equation}
\label{eq:lindqvist}
(|\eta|^{p-2}\eta - |\xi|^{p-2}\xi)\cdot (\eta - \xi) \geq \left\{
	\begin{aligned}
	& C_0\,|\eta-\xi|^{p} && \text{if }p\geq 2,\\
	& \frac{C_0\,|\eta-\xi|^{2}}{\left(|\xi|^{p}+|\eta|^{p}+1\right)^{2-p}} && \text{if }1<p<2,
	\end{aligned}
	\right.
\end{equation}
for every $\eta,\xi \in \mathbb{R}^N$ and some positive constant $C_0$ which depends only on $p$. 

Set 
$$w_k:=(|\nabla v_k|^{p-2}\nabla v_k - |\nabla v|^{p-2}\nabla v)\cdot(\nabla v_k - \nabla v).$$
By the inequality \eqref{eq:lindqvist}, it suffices to show that, after passing to a subsequence, $w_k \to 0$ a.e. in $\Theta$. 

Note that \eqref{eq:lindqvist} implies that $w_k \geq 0$. Let $\psi \in \mathcal{C}_c^{\infty}(\Theta)^G$ with $\psi \geq 0$ and set $X:=\mathrm{supp}(\psi)$. For each $k,$ we split $X$ into $A_{k}:=\{x\in X:|v_{k}(x)-v(x)| \leq 1\}$ and $B_{k}:=\{x\in X:|v_{k}(x)-v(x)| > 1\}$. After passing to a subsequence, we have that $v_{k}\to v$ in $L^{p}(X)$. Hence, $|B_{k}| \to 0$. Moreover, as $T(v_k-v)=v_k-v$ in $A_k$ and $\nabla (T(v_k-v))=0$ a.e. in $B_k$, we have that
$$\int_{A_{k}}\psi w_{k}=\int_{\Theta}\psi\left(|\nabla v_k|^{p-2}\nabla v_k - |\nabla v|^{p-2}\nabla v\right) \cdot \nabla(T(v_k-v)).$$
Fix $s\in(0,1)$. Since $(v_k)$ is bounded in $D^{1,p}(\mathbb{R}^N)$, using Hölder's inequality and assumption \eqref{eq:ct} we get that
\begin{align*}
0&\leq \int_{\Theta}(\psi w_{k})^{s}=\int_{A_{k}}(\psi w_{k})^{s} + \int_{B_{k}}(\psi w_{k})^{s} \\
&\leq |A_{k}|^{1-s}\left(\int_{A_{k}}\psi w_{k}\right)^{s} + |B_{k}|^{1-s}\left(
\int_{B_{k}}\psi w_{k}\right)^{s} \\
&\leq |X|^{1-s}o(1) + o(1) = o(1).
\end{align*}
So, passing to a subsequence, we have that $\psi w_{k}\to 0$ a.e. in $\Theta$.

Observe that the set $\Theta_m:=\{x\in\Theta: |x|<m,\,\mathrm{dist}(x,\partial \Theta)>\frac{1}{m}\}$ is $G$-invariant for each $m\in\mathbb{N}$. It is easy to construct a $G$-invariant function $\psi_m\in\mathcal{C}_{c}^{\infty}(\Theta)$ such that $\psi_m \geq 0$ and $\psi_m(x)=1$ for every $x\in \Theta_m$. Therefore, passing to a subsequence, $w_k \to 0$ a.e. in $\Theta_m$ for each $m\in\mathbb{N}$. A standard diagonal argument yields a subsequence such that $w_{k}\to 0$ a.e. in $\Theta$, and finishes the proof of the lemma.

\end{proof}

\begin{lemma}
\label{lem:solution}
Let $(v_k)$ be a sequence in $D^{1,p}(\mathbb{R}^N)^{\phi}$ and $v\in D_0^{1,p}(\Theta)^{\phi}$ be such that $v_k \rightharpoonup v$ weakly in $D^{1,p}(\mathbb{R}^N)$. Assume that
$$J'(v_k)\varphi = o(1)\qquad\text{ and }\qquad J'(v_k)[\psi T(v_k-v)] = o(1),$$
for every $\varphi \in \mathcal{C}_c^{\infty}(\Theta)^{\phi}$ and $\psi \in \mathcal{C}_c^{\infty}(\Theta)^G$. Then $v$ is a solution to the problem \eqref{eq:symmprob}.
\end{lemma}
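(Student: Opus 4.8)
The plan is, first, to upgrade the weak convergence $v_k\rightharpoonup v$ to almost everywhere convergence of the \emph{gradients} by means of Lemma \ref{lem:ct}; then to pass to the limit in the equation tested against $\phi$-equivariant functions, obtaining $J'(v)\varphi=0$ for every $\varphi\in\mathcal{C}_c^\infty(\Theta)^\phi$; and finally to invoke Lemma \ref{lem:symcrit} to remove the symmetry restriction on the test functions and conclude that $v$ solves \eqref{eq:symmprob}.

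The first and main step is to show that the hypothesis $J'(v_k)[\psi T(v_k-v)]=o(1)$, valid for every $\psi\in\mathcal{C}_c^\infty(\Theta)^G$, implies condition \eqref{eq:ct}. Passing to a subsequence, the local compactness of the Sobolev embedding gives $v_k\to v$ in $L^p_{\mathrm{loc}}(\mathbb{R}^N)$ and a.e.\ in $\mathbb{R}^N$; hence $T(v_k-v)\to 0$ a.e.\ and, being bounded by $1$, also in $L^q_{\mathrm{loc}}(\mathbb{R}^N)$ for every finite $q$, while $\chi_{\{|v_k-v|<1\}}\to 1$ a.e. Expanding $\nabla(\psi T(v_k-v))=\psi\,\nabla(T(v_k-v))+T(v_k-v)\,\nabla\psi$ we get
\begin{align*}
J'(v_k)[\psi T(v_k-v)]
&= \int_\Theta \psi\,|\nabla v_k|^{p-2}\nabla v_k\cdot\nabla(T(v_k-v)) \\
&\quad + \int_\Theta T(v_k-v)\,|\nabla v_k|^{p-2}\nabla v_k\cdot\nabla\psi
 - \int_\Theta |v_k|^{p^*-2}v_k\,\psi\,T(v_k-v).
\end{align*}
On the compact support of $\psi$ the factors $|\nabla v_k|^{p-2}\nabla v_k$ and $|v_k|^{p^*-2}v_k$ are bounded in $L^{p/(p-1)}$ and $L^{p^*/(p^*-1)}$ respectively, while $T(v_k-v)\to 0$ in the conjugate local Lebesgue spaces, so by Hölder's inequality the last two integrals tend to $0$. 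Moreover $\int_\Theta \psi\,|\nabla v|^{p-2}\nabla v\cdot\nabla(T(v_k-v))=\int_\Theta \chi_{\{|v_k-v|<1\}}\,\psi\,|\nabla v|^{p-2}\nabla v\cdot(\nabla v_k-\nabla v)\to 0$, since $\chi_{\{|v_k-v|<1\}}\,\psi\,|\nabla v|^{p-2}\nabla v\to\psi\,|\nabla v|^{p-2}\nabla v$ strongly in $L^{p/(p-1)}$ by dominated convergence, whereas $\nabla v_k-\nabla v\rightharpoonup 0$ in $L^p$. Subtracting this last limit from the displayed identity yields exactly \eqref{eq:ct}, so Lemma \ref{lem:ct} provides, along a further subsequence, $\nabla v_k\to\nabla v$ a.e.\ in $\Theta$.

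It then remains to pass to the limit in the first hypothesis. Fix $\varphi\in\mathcal{C}_c^\infty(\Theta)^\phi$. Since $|\nabla v_k|^{p-2}\nabla v_k$ is bounded in $L^{p/(p-1)}(\mathbb{R}^N)$ and converges a.e.\ to $|\nabla v|^{p-2}\nabla v$ on $\mathrm{supp}(\varphi)$, it converges weakly to that limit in $L^{p/(p-1)}$ over that set; similarly, using $v_k\to v$ in $L^q_{\mathrm{loc}}$ for $q<p^*$ and a.e., we get $|v_k|^{p^*-2}v_k\rightharpoonup|v|^{p^*-2}v$ locally in $L^{p^*/(p^*-1)}$. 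Hence $J'(v_k)\varphi\to J'(v)\varphi$, and since $J'(v_k)\varphi=o(1)$ we conclude $J'(v)\varphi=0$ for every $\varphi\in\mathcal{C}_c^\infty(\Theta)^\phi$. Applying Lemma \ref{lem:symcrit} with $u=v$ then gives $J'(v)\vartheta=0$ for every $\vartheta\in\mathcal{C}_c^\infty(\Theta)$, i.e.\ $v$ solves \eqref{eq:symmprob}.

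The only genuinely delicate point is the reduction of the second hypothesis to \eqref{eq:ct}: one must verify that each error term — the one produced by differentiating $\psi$, the one coming from the critical nonlinearity, and the piece involving $|\nabla v|^{p-2}\nabla v$ — truly vanishes in the limit, which rests on the local compactness of the Sobolev embedding and on the $D^{1,p}(\mathbb{R}^N)$-boundedness of $(v_k)$. Once \eqref{eq:ct} is in hand, the pointwise convergence of gradients furnished by Lemma \ref{lem:ct} (which itself relies on the Lindqvist monotonicity inequalities \eqref{eq:lindqvist}) makes the passage to the limit and the final appeal to Lemma \ref{lem:symcrit} routine.
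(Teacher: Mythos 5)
Your proposal is correct and follows essentially the same route as the paper: derive the hypothesis \eqref{eq:ct} of Lemma \ref{lem:ct} from the assumption $J'(v_k)[\psi T(v_k-v)]=o(1)$ by expanding $\nabla(\psi T(v_k-v))$ and killing the error terms via Hölder and local compactness, obtain a.e.\ convergence of the gradients, pass to the limit in $J'(v_k)\varphi$, and finish with Lemma \ref{lem:symcrit}. The only (harmless) cosmetic difference is that you handle the term $\int_\Theta\psi\,|\nabla v|^{p-2}\nabla v\cdot\nabla(T(v_k-v))$ by writing $\nabla(T(v_k-v))=\chi_{\{|v_k-v|<1\}}(\nabla v_k-\nabla v)$ and using dominated convergence plus strong-times-weak pairing, whereas the paper uses Egorov's theorem; both are valid.
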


\begin{proof}
First, we claim that $\nabla v_k \to \nabla v$ a.e. in $\Theta$. To prove this claim, we apply Lemma \ref{lem:ct}. Let $\psi \in \mathcal{C}_c^{\infty}(\Theta)^G$. After passing to a subsequence, we have that $v_k \to v$ a.e. in $\Theta$. Then, Egorov's theorem asserts that for every $\delta > 0$ there exists  $A_{\delta}\subset\mathrm{supp}(\psi)$ such that $|A_{\delta}|<\delta$ and $v_k \to v$ uniformly in $\mathrm{supp}(\psi) \smallsetminus A_{\delta}$. So $|v_k(x)-v(x)|\leq 1$ for all $x\in \mathrm{supp}(\psi) \smallsetminus A_{\delta}$ and $k$ large enough. Hence,
\begin{align*}
&\left|\int_{\Theta}\psi|\nabla v|^{p-2}\nabla v \cdot \nabla(T(v_k-v))\right| \\
&\leq \left|\int_{\mathbb{R}^N \smallsetminus A_{\delta}}\psi|\nabla v|^{p-2}\nabla v \cdot \nabla(v_k-v)\right| + \left|\int_{A_{\delta}}\psi|\nabla v|^{p-2}\nabla v \cdot \nabla(T(v_k-v))\right|\\
&\leq o(1) + C\delta,
\end{align*}
because $v_k \rightharpoonup v$ weakly in $D^{1,p}(\mathbb{R}^N)$. Therefore,
\begin{equation}
\label{eq:ct1}
\lim_{k \to \infty}\int_{\Theta}\psi|\nabla v|^{p-2}\nabla v \cdot \nabla(T(v_k-v) = 0.
\end{equation}
On the other hand, as $J'(v_k)[\psi T(v_k-v)] = o(1)$, from Hölder's inequality and the dominated convergence theorem we get that
\begin{align*}
&\left|\int_{\Theta}\psi|\nabla v_k|^{p-2}\nabla v_k \cdot \nabla(T(v_k-v))\right| \\
& \leq \left|\int_{\Theta}|\nabla v_k|^{p-2}\nabla v_k \cdot \nabla(\psi T(v_k-v))\right| + \left|\int_{\Theta}|\nabla v_k|^{p-2}\nabla v_k \cdot T(v_k-v)\nabla\psi \right|\\
& \leq \left|\int_{\Theta}|v_k|^{p^{*}-2} v_k (\psi T(v_k-v))\right| + \left|\int_{\Theta}|\nabla v_k|^{p-2}\nabla v_k \cdot T(v_k-v)\nabla\psi \right| + o(1) \\
& \leq C\left(\int_{\Theta}|\psi T(v_k-v)|^{p^{*}}\right)^{\frac{1}{p^{*}}} + C\left(\int_{\Theta}|T(v_k-v)\nabla\psi|^p \right)^{\frac{1}{p}} + o(1) = o(1).
\end{align*}
Therefore,
\begin{equation}
\label{eq:ct2}
\lim_{k \to \infty}\int_{\Theta}\psi|\nabla v_k|^{p-2}\nabla v_k \cdot \nabla(T(v_k-v) = 0.
\end{equation}
From Lemma \ref{lem:ct}, and identities \eqref{eq:ct1} and \eqref{eq:ct2} we get that $\nabla v_k \to \nabla v$ a.e. in $\Theta$ and, as $v_k \to v$ a.e. in $\Theta$, using again Egorov's theorem we obtain that
\begin{align*}
\lim_{k \to \infty}\int_{\Theta}|\nabla v_k|^{p-2}\nabla v_k \cdot \nabla \varphi &= \int_{\Theta}|\nabla v|^{p-2}\nabla v \cdot \nabla \varphi,\\
\lim_{k \to \infty}\int_{\Theta}|v_k|^{p^*-2} v_k \varphi &= \int_{\Theta}|v|^{p^*-2} v  \varphi,
\end{align*}
for every $\varphi \in \mathcal{C}_c^{\infty}(\Theta)^{\phi}$. Hence,
$$J'(v)\varphi = \lim_{k \to \infty}J'(v_k)\varphi = 0\qquad\text{for every }\,\varphi \in \mathcal{C}_c^{\infty}(\Theta)^{\phi}.$$
So, by Lemma \ref{lem:symcrit}, $v$ is a solution to the problem \eqref{eq:symmprob}, as claimed.
\end{proof}

 \vspace{15pt}

\begin{flushleft}
\textbf{Mónica Clapp}\\
Instituto de Matemáticas\\
Universidad Nacional Autónoma de México\\
Circuito Exterior, Ciudad Universitaria\\
04510 Coyoacán, CDMX\\
Mexico\\
\texttt{monica.clapp@im.unam.mx} \vspace{10pt}

\textbf{Luis Lopez Rios}\\
Instituto de Matemáticas\\
Universidad Nacional Autónoma de México\\
Circuito Exterior, Ciudad Universitaria\\
04510 Coyoacán, CDMX\\
Mexico\\
\texttt{lflopezrios@gmail.com}
\end{flushleft}

\end{document}